\documentclass{amsart}
\usepackage[utf8x]{inputenc}
\usepackage[colorlinks=true, pdfstartview=FitV, linkcolor=purple, citecolor=purple]{hyperref}
\usepackage{amsmath}
\usepackage{amsthm}
\usepackage[foot]{amsaddr}
\usepackage{amssymb}
\usepackage{academicons}
\usepackage{ dsfont }
\usepackage{lineno}
\usepackage{cleveref}
\usepackage{bbm}
\usepackage{pictex}
\usepackage{tikz}
\usepackage{pgfplots}
\pgfplotsset{compat = newest}
\usepackage{subfigure}
\usetikzlibrary{shapes, positioning,arrows, arrows.meta,calc}
\usepackage[margin=0.8in]{geometry}
\usepackage{tabularx}
\usepackage{float}
\usepackage{import}
\usepackage{enumitem}
\usepackage{listings}
\usepackage{xcolor} 

\newtheorem{theorem}{Theorem}[section]
\newtheorem{lemma}[theorem]{Lemma}
\newtheorem{proposition}[theorem]{Proposition}

\newtheorem{remark}[theorem]{Remark}

\newtheorem{example}{Example}
\newtheorem{conjecture}{Conjecture}

\title{Coincidence of invariant measure for the alternate base transformations}

\date{March 2026}
\author{Karma Dajani}
\author{Niels Langeveld}
\begin{document}

\maketitle
\begin{abstract}
    We characterize all pairs $(\beta,n),(\beta^\prime,m)$ such that the alternate $(\beta,n)$ and $(\beta^\prime,m)$-transformations $K_{(\beta,n)}$ and $K_{(\beta^\prime,m)}$ have the same absolutely continuous invariant measure, where $K_{(\beta,n)}(i,x)=(i+1 \mod 2 ,T_i(x))$ with $i\in\{0,1\}$, $T_0(x)=T_\beta (x)=\beta x \mod 1$, $T_1(x)=T_n(x)=nx\mod 1$ with $\beta>1$ real and $n\geq 2$ an integer.
    
\end{abstract}
\section{Introduction}
Given a sequence ${\bf\beta}=(\beta_0,\beta_1,\ldots)$ of real numbers with $\beta_i>1$ and $\prod_{i=0}^{\infty}\beta_i=\infty$. A Cantor expansion of $x\in [0,1)$ in base ${\bf\beta}=(\beta_0,\beta_1,\ldots)$ is an expansion of the form
\[
x=\sum_{n=0}^\infty  \frac{a_n}{\prod_{k=0}^n \beta_k},
\]
with $a_n\in\{0,\ldots ,\lceil \beta_k\rceil -1\}$. In case $a_n$ is the largest possible, then one speaks of the greedy Cantor base expansion. These are generalization of the classical $\beta$-expansion introduced by R\'enyi ~\cite{R57} and later Parry ~\cite{P60}.
Cantor base expansions been studied recently in ~\cite{CC21}, and a characterization of the associated shift spaces was given. In particular, a generalization of Parry's theorem characterizing greedy $\beta$-expansions was proved. In the special case when the base sequence 
${\bf\beta}=(\beta_0,\beta_1,\ldots,\beta_{p-1},\beta_0,\beta_1,\ldots,\beta_{p-1},\ldots)$ is periodic, which we simply denote by $(\beta_0,\beta_1,\ldots,\beta_{p-1})$, one speaks of the alternate base expansion. The symbolic and dynamical properties of such expansions were studied extensively in the past five years; see ~\cite{CC21, CCMP23, C23, MPS23, CCK24, CCD23}.

Given a periodic base $(\beta_0,\beta_1,\ldots,\beta_{p-1})$, in ~\cite{CCD23} a map $K_{(\beta_0,\ldots,\beta_{p-1})}$ was introduced that generates all alternate greedy Cantor base expansions of the form 
\[
x=\sum_{n=0}^\infty  \frac{a_n}{\prod_{k=0}^n \beta_k},
\]
where $\beta_k=\beta_{k\mod p}$ and $a_n\in\{0,\ldots ,\lceil \beta_k\rceil -1\}$. The map 
$K_{(\beta_0,\ldots,\beta_{p-1})}$ is defined on $\{0,\ldots,p-1\}\times[0,1)$ and is given by 
\[
K_{\beta_0,\ldots,\beta_{p-1}}(i,x)=(i+1\mod p , T_{\beta_i}(x)),
\]
see Figure \ref{fig:graphicalpresentation} for a graphical presentation.
\medskip
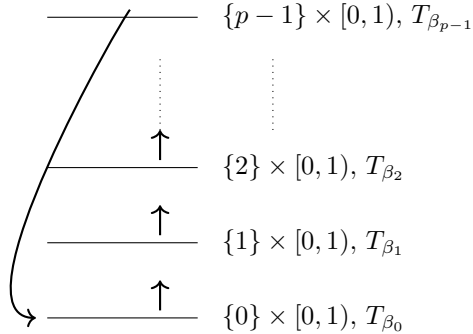
\begin{figure}[thb]\label{fig:graphicalpresentation}
\begin{center}
\begin{tikzpicture}
 \begin{scope}
  \draw (0,0) --(2,0) (0,1);
    \draw[->, thick] (1.5,.1) --(1.5,.5);
  \end{scope}
   \draw (0,1) --(2,1) (1,1);
     \draw[->, thick] (1.5,1.1) --(1.5,1.5);
 \draw (0,2) --(2,2);
     \draw[->, thick] (1.5,2.1) --(1.5,2.5);

\draw[dotted] (1.5,2.5)--(1.5,3.5) ;
 \draw (0,4) --(2,4);
     \draw[->, thick] (1.1,4.1) .. controls (1,4) and (-1.3,0) .. (-.15,0);
     \node[right] at (2.2,0) {$\{0\}\times [0,1), \, T_{\beta_0}$};
\node[right] at (2.2,1)  {$\{1\}\times [0,1),\, T_{\beta_1}$};
\node[right] at (2.2,2)  {$\{2\}\times [0,1),\, T_{\beta_2}$};
\draw[dotted] (3,2.5)--(3,3.5);
\node[right] at (2.2,4)  {$\{p-1\}\times [0,1),\, T_{\beta_{p-1}}$};
\end{tikzpicture}
\end{center}
\caption{The map $K_{(\beta_0,\ldots,\beta_{p-1})}$.}
\end{figure}
\medskip

The absolutely continuous invariant measure $\mu_{(\beta_0,\ldots, \beta_{p-1})}$ for $K_{(\beta_0,\ldots,\beta_{p-1})}$ is given by 
\[
\mu_{(\beta_0,\ldots, \beta_{p-1})}\left(\bigcup_{i=0}^{p-1}\{i\}\times A_i\right)=\frac{1}{p}\sum_{i=0}^{p-1} \mu_{\beta_{i-1}\circ\cdots\circ \beta_0\circ \beta_{p-1}\circ \cdots \beta_{i+1}\circ \beta_i}(A_i),
\]
where 
$\mu_{\beta_{i-1}\circ\cdots\circ \beta_0\circ \beta_{p-1}\circ \cdots \beta_{i+1}\circ \beta_i}$
is the absolutely continuous invariant measure for the composition 
\[T_{\beta_{i-1}}\circ\cdots\circ T_{\beta_0}\circ T_{\beta_{p-1}}\circ \cdots \circ T_{\beta_{i+1}}\circ T_{\beta_i}\] 
defined on the $i^{\text{th}}$ level and $\beta_{-1}$ should be understood as $\beta_{p-1}$. Each such composition is a piecewise linear map with constant slope $\beta_0\beta_1\cdots\beta_{p-1}$.
When the sequence of bases is periodic with period one, then the alternate greedy Cantor base expansion is the classical greedy $\beta$-expansion as defined by R\'enyi in ~\cite{R57}, so $K_{\beta}=T_{\beta}$. This map preserves, the famous R\'enyi-Parry measure $\mu_{\beta}$ with density
\[f_{\beta}(x)=\frac{1}{C}\sum_{n=0}^{\infty}\mathds{1}_{[0,T^n_{\beta}(1))}(x),\]
where $C$ is a normalizing constant ~\cite{P60}. In ~\cite{HW25}, Huang and Wang answered a long standing conjecture of Bertrand-Mathis ~\cite{BM98} by proving the following theorem.
\begin{theorem}[\cite{HW25}]\label{th:HuangWang}
For two non-integers $\beta,\beta^\prime>1$ the R\'enyi-Parry measures coincide if and only  if $\beta$ solves the equation $\beta^2=p\beta + q$ with $p,q\in \mathbb{N}$, 
$p\geq q\geq 1$ and $\beta^\prime=\beta+1$.  
\end{theorem}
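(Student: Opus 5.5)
The plan is to compare the Parry densities directly. Since $f_\beta$ is a step function, coincidence of measures forces equality of the densities almost everywhere, and hence equality of the jump sets and jump sizes.

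\textbf{Sufficiency.} Suppose $\beta^2=p\beta+q$ with $p\ge q\ge 1$. Then $1=p/\beta+q/\beta^2$, and the condition $p\ge q$ makes $1=pq^\infty$-type greedy: the greedy expansion of $1$ is $d_\beta(1)=pq$. Hence the orbit of $1$ is $1\mapsto q/\beta\mapsto 0$, and
\[
f_\beta=\tfrac1C\bigl(\mathds 1_{[0,1)}+\mathds 1_{[0,q/\beta)}\bigr).
\]
For $\beta'=\beta+1$ we have $\beta'-(p+1)=\beta-p=q/\beta$, so $T_{\beta'}(1)=q/\beta$. Next compute
\[
T_{\beta'}(q/\beta)=q+q/\beta \mod 1=q/\beta .
\]
So $q/\beta$ is a fixed point of $T_{\beta'}$. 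Its density is therefore a constant multiple of $\mathds 1_{[0,1)}+\mathds 1_{[0,q/\beta)}$ times a factor, and the orbit of $1$ contributes $\mathds 1_{[0,q/\beta)}$ infinitely often. The sum then diverges, so in this case the density must instead be taken from the invariant-density formula in the eventually periodic form $\sum_n \beta'^{-n}\mathds 1_{[0,T^n 1)}$. With the $\beta^{-n}$ weights, $f_{\beta'}\propto 1+\mathds 1_{[0,q/\beta)}\sum_{n\ge1}\beta'^{-n}$. Comparing ratios of the two jumps, $\beta^{-1}$ versus $\sum_{n\ge 1}\beta'^{-n}=1/(\beta'-1)=1/\beta$, they agree. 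This gives sufficiency.

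\textbf{Necessity.} I will use the weighted density $h_\beta=\sum_n\beta^{-n}\mathds 1_{[0,T^n_\beta 1)}$.

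1. Equality of measures means $h_\beta$ and $h_{\beta'}$ are proportional. Their discontinuity points coincide, so the orbits $\{T^n_\beta 1\}$ and $\{T^n_{\beta'}1\}$ generate the same finite or infinite set of jump locations, with proportional jump masses.

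2. Since the densities are monotone step functions with total jump $h(0)-h(1^-)$, I will compare the largest jump locations. For both maps the point $T1=\beta-\lfloor\beta\rfloor$ is an orbit point, and the interplay of $\beta-\lfloor\beta\rfloor$ with the slopes should force $\beta'-\beta\in\mathbb Z$.

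3. I expect the relation $h_\beta\circ T_\beta$ versus $h_\beta$ (the Perron–Frobenius equation) to yield that $\mathcal L_{\beta}h=h=\mathcal L_{\beta'}h$. Evaluating near $0$ and near the jumps should then give the algebraic constraints: the jump sizes must decay geometrically at both rates $1/\beta$ and $1/\beta'$. This forces the orbit of $1$ to hit a fixed point after one step, which produces the quadratic equation with $p\ge q$ from greediness.

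The main obstacle is step 3, which rules out long or infinite orbits. Here I would first try counting jumps. Two sequences of distinct points whose masses decay at different geometric rates cannot match unless the orbit is eventually a single repeated point (for $\beta'$) or finite (for $\beta$). This reduction leaves only a few short-orbit cases to check by hand.
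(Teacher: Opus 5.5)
The paper does not prove this statement. It is quoted from Huang--Wang \cite{HW25} and used as a black box, so your proposal has to stand on its own.

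Your sufficiency half is correct, provided you use Parry's density with its weights from the start, $h_\gamma=\sum_{n\ge 0}\gamma^{-n}\mathds{1}_{[0,T_\gamma^n 1)}$. The unweighted formula in the paper's introduction is a misprint, which is why you ran into a divergent sum and why your first displayed $f_\beta$ is wrong. With the weights, $d_\beta(1)=pq$ gives $h_\beta=1+\beta^{-1}\mathds{1}_{[0,q/\beta)}$. Since $q/\beta$ is a fixed point of $T_{\beta+1}$ reached after one step, $h_{\beta+1}=1+\mathds{1}_{[0,q/\beta)}\sum_{n\ge1}(\beta+1)^{-n}=h_\beta$.

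The necessity half is not a proof. Steps 2 and 3 are stated as hopes: ``should force $\beta'-\beta\in\mathbb{Z}$'', ``I expect'', ``should then give''. Your one concrete mechanism is that jump masses decaying at rates $1/\beta$ and $1/\beta'$ can only match if the $\beta$-orbit of $1$ is finite and the $\beta'$-orbit lands on a fixed point after one step. This does not follow from mass considerations. Periodic points carry amplified masses $\beta'^{-j}/(1-\beta'^{-\ell})$, and these can interleave with the preperiodic masses $\beta'^{-j}$.

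Concretely, take $\beta^N=\beta+1$ with $N\ge 3$ and $\beta'=\beta^2$. Compare a hypothetical expansion $d_\beta(1)$ of length $N$ with a $\beta'$-orbit having $\lceil N/2\rceil-1$ preperiodic points followed by a cycle of length $\lfloor N/2\rfloor$. The two produce the same number of jumps and the same multiset of masses $\{\beta^{-1},\dots,\beta^{-(N-1)}\}$. They also give the same total mass $1/(\beta'-1)$. For $N\ge4$ the cycle has length at least $2$, and for $N=3$ the fixed point is reached only after two steps. So mass counting cannot yield your reduction.

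Such pairs are excluded only by the actual digits. For example, with $\rho^3=\rho+1$ one has $d_\rho(1)=10001$ and $d_{\rho^2}(1)=1101$. That is information about jump locations and greedy digit structure, which your outline never uses. Since you obtain no bound on orbit lengths, ``a few short-orbit cases'' is also unfounded.

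Two cheap observations do take you a long way:
\begin{enumerate}
\item $h_\gamma(x)\to1$ as $x\to1^-$, so equality of the measures gives $h_\beta=h_{\beta'}$ exactly, not just proportionality.
\item $h(0)-1=\sum_{n\ge1,\,T^n1\neq0}\gamma^{-n}$, which equals $1/(\gamma-1)$ when $d_\gamma(1)$ is infinite.
\end{enumerate}
For $\beta<\beta'$, these two facts plus a comparison of the largest jumps (in case both expansions are finite) force three things. First, $d_\beta(1)$ is finite of some length $N\ge2$. Second, $d_{\beta'}(1)$ is infinite and eventually periodic. Third, $\frac{1}{\beta'-1}=\sum_{n=1}^{N-1}\beta^{-n}$. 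For $N=2$ this is exactly $\beta'=\beta+1$ with $\beta^2=d_1\beta+d_2$ and $d_1\ge d_2\ge1$.

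The missing idea is an argument that excludes $N\ge3$. That is the genuinely hard step, and the examples above show it cannot come from counting jumps or comparing decay rates.
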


In this article, we initiate a similar investigation for the alternate base transformation $K_{(\beta,n)}$, 
with $\beta>1$ a real number and $n\geq 2 $ an integer. In this case, $K_{(\beta, n)}:\{0,1\}\times [0,1)\rightarrow \{0,1\}\times [0,1)$ is defined by $K_{(\beta,n)}(i,x)=(i+1 \mod 2 ,T_i(x))$ with $i\in\{0,1\}$, $T_0(x)=T_\beta (x)=\beta x \mod 1$, $T_1(x)=T_n(x)=nx\mod 1$.  In the special case of the pair $(\beta,n)$, the $K_{(\beta,n)}$-invariant measure $\mu_{(\beta,n)}$ is given by
\[
\mu_{(\beta,n)}\Big(\{0\}\times A_0\cup \{1\}\times A_1\Big)=\frac{1}{2}\mu_{n\circ\beta}(A_0) +\frac{1}{2}\mu_{\beta\circ n}(A_1).
\]
Our aim is to characterize all pairs $(\beta,n)$ and $(\beta^\prime,m)$ such that $\mu_{(\beta,n)}=\mu_{(\beta^\prime,m)}$, where $\beta,\beta^\prime>1$ are real numbers and $n,m\geq 2$ are integers. From the above, we see that $\mu_{(\beta,n)}=\mu_{(\beta^\prime,m)}$ if and only if
\[
\mu_{n\circ\beta}=\mu_{m\circ\beta^\prime}\quad \text{ and } \mu_{\beta\circ n}=\mu_{\beta^\prime\circ m}.
\]

We prove the following characterization.
\begin{theorem}\label{th:maintheorem}
Let $\beta,\beta^\prime>1$ be real non-integer numbers, and $n,m\geq 2$ integers. Then, 
    $\mu_{(\beta,n)}=\mu_{(\beta^\prime,m)}$ if and only if $\beta=\beta^\prime =\frac{p}{q}$, with $p,q$ relatively prime and $n,m\in\{kq: k\geq 1\}$.
\end{theorem}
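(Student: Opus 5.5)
The argument has two directions. The sufficiency is direct, and the necessity goes through the densities of the two compositions.

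\emph{Sufficiency.} Let $\beta=p/q$ in lowest terms and $n=kq$. Then $T_\beta\circ T_n(x)=\tfrac{p}{q}(kqx \bmod 1)\bmod 1 = pkx \bmod 1$, because $\tfrac{p}{q}\cdot kq\,j\in\mathbb{Z}$ for every integer $j$. So $T_\beta\circ T_n = T_{pk}$, which preserves Lebesgue measure. For the other composition, $T_n\circ T_\beta(x)= kq\,(\tfrac{p}{q}x - \lfloor \tfrac{p}{q}x\rfloor)\bmod 1 = kpx \bmod 1$, since $kq\lfloor\cdot\rfloor$ is an integer. So this composition is also $T_{kp}$, with invariant measure Lebesgue. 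Hence $\mu_{n\circ\beta}=\mu_{\beta\circ n}=\lambda$ for every such pair, and any two pairs $(p/q,kq)$ and $(p/q,k'q)$ give the same $\mu$.

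\emph{Necessity.} Assume $\mu_{n\circ\beta}=\mu_{m\circ\beta'}$ and $\mu_{\beta\circ n}=\mu_{\beta'\circ m}$. The key step is to show that these two measures must each be Lebesgue. First, the density of the invariant measure of the piecewise linear map $S=T_\beta\circ T_n$ has the form
\[
h(x)=\sum_{j}\frac{1}{(\beta n)^{j}}\sum_{c}\mathbb 1_{[0,S^j(c^-))}(x),
\]
a Parry/Kopf-type formula summed over the left limits at the discontinuities $c$. The map $T_\beta\circ T_n$ consists of $n$ identical copies of $T_\beta$ scaled to the intervals $[j/n,(j+1)/n)$. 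Its full branches begin at $0$, so it is a greedy-type map. Its only non-full branches are the last branch of each copy, with value $\beta\cdot 1^- \bmod 1 = \beta-\lfloor\beta\rfloor=:r$. The orbit of $r$ under $S$ therefore governs the density.

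Since $\beta$ is not an integer, $r\in(0,1)$ and the density has a jump at $r$. Equality of measures forces equality of densities almost everywhere, so the jump sets and jump sizes must agree. In particular the slopes must agree, $\beta n=\beta' m$, and the first discontinuity points $r=\{\beta\}$ and $r'=\{\beta'\}$ must match in the lowest-order term. The same comparison applied to $T_n\circ T_\beta$ gives a discontinuity at $\{n\{\beta\}\}$, the image of $1^-$.

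I expect the main obstacle to be turning this jump-set matching into the conclusion $\beta=\beta'$ and $\beta\in\mathbb{Q}$ with $q\mid n$, because orbits can cancel jumps. The plan is to compare the two compositions against each other. For $T_n\circ T_\beta$, the left limit $1^-$ maps to $n r \bmod 1$, and if $nr\in\mathbb Z$ the density is constant, i.e.\ the measure is Lebesgue. Conversely, if $nr\notin\mathbb{Z}$, we track the largest point of discontinuity of each density. For the first density it is $\max$ of the orbit of $r$; for the second it involves $\{nr\}$.

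Matching $\{\beta\}$, $\{n\{\beta\}\}$ with $\{\beta'\}$, $\{m\{\beta'\}\}$, together with $\beta n=\beta' m$, should force either both measures to be Lebesgue or $(\beta,n)=(\beta',m)$. In the case $(\beta,n)=(\beta',m)$ there is nothing to prove. In the Lebesgue case, $n\beta\in\mathbb Z$, so $\beta=p/q$ with $q\mid n$, and likewise $\beta'=p'/q'$ with $q'\mid m$.

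The remaining step is to show that Lebesgue invariance forces $\beta=\beta'$. When it fails, I would rule out the leftover possibility $\beta\neq\beta'$ by writing $T_\beta\circ T_n=T_{\beta n}$ only when $q\mid n$. The other composition being $T_{\beta'm}$-like with the same slope then pins down $\beta'$ through the jump at $\{\beta'\}$, which must vanish from the density. That vanishing requires the orbit of $\{\beta'\}$ to hit $0$, and combining this with the slope equality gives $\beta'=\beta$. I anticipate having to handle the possible cancellations in the infinite jump sums carefully here, likely by comparing the smallest-index terms of the two series.
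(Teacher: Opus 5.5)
Your proposal has genuine gaps in both directions; the necessity plan aims at intermediate claims that are false.

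\textbf{Sufficiency.} The argument rests on a false identity. With $\beta=p/q$, $n=kq$ and $j=\lfloor kqx\rfloor$ one has $T_\beta(T_n x)=\{\tfrac pq(kqx-j)\}=\{kpx-\tfrac{pj}{q}\}$. The integer part is removed \emph{before} multiplying by $\beta$, so the correction term is $pj/q$, not $\tfrac pq\cdot kq\,j$, and it is an integer only when $q\mid j$. For example, with $\beta=7/3$, $n=3$, just to the right of $x=1/3$ the composition is near $0$ while $T_7$ is near $1/3$. You record the obstruction yourself a paragraph later: $T_\beta\circ T_n$ has $n$ non-full branches onto $[0,\{\beta\})$, which $T_{kp}$ does not have. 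Only $T_n\circ T_\beta=T_{n\beta}$ collapses to $T_{kp}$.

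In the Dajani--Kalle formula, the density of $T_{\beta\circ n}$ is $1$ plus a positive combination of indicators $\mathds{1}_{[0,a)}$, with $\phi_1=\frac1\beta\mathds{1}_{[0,\{\beta\})}$. So it has a jump at $\{\beta\}$ that nothing can cancel, and $\mu_{\beta\circ n}$ is never Lebesgue when $\beta\notin\mathbb N$. For $\beta=p/q$, $n=kq$ its density is $\frac{p-r}{p}+\frac qp\mathds{1}_{[0,r/q)}$ with $r=p-q\lfloor p/q\rfloor$. Sufficiency comes from this density being independent of $k$. Every non-full branch maps onto $[0,r/q)$, which is exactly the union of the first $kr$ intervals $[\frac{l}{kq},\frac{l+1}{kq})$. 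Hence $D_j$ has $k^jq\,r^{j-1}$ elements and $\phi_j=\frac qp(\frac rp)^{j-1}\mathds{1}_{[0,r/q)}$. That computation is the real content of this direction, and your argument has nothing in its place.

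\textbf{Necessity.} The ``key step'' that the common measures are Lebesgue fails precisely in the coincidence case $(p/q,kq)$ versus $(p/q,lq)$ with $k\neq l$. So does ``the slopes must agree'': there the slopes of $T_{\beta\circ n}$ are $kp\neq lp$.

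On the other side, $\mu_{n\circ\beta}$ and $\mu_{m\circ\beta'}$ are the R\'enyi--Parry measures of $T_{n\beta}$ and $T_{m\beta'}$. Deciding when two such measures coincide is exactly the Bertrand-Mathis question settled by Huang and Wang, and their answer includes the unequal slopes $\gamma,\gamma+1$ with $\gamma^2=a\gamma+b$, $a\ge b\ge 1$. That theorem is the missing ingredient, and there is no reason to expect it to fall out of matching first jump points. The paper uses it as follows.

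If $\beta=p/q$, a Huang--Wang pair is excluded by the rational root theorem, so $n\beta\in\mathbb N$, i.e.\ $q\mid n$. Then $T_{m\beta'}$ must preserve $\lambda$, forcing $m\beta'\in\mathbb N$, so $\beta'=p'/q'$ with $q'\mid m$. One then equates two \emph{non-constant} explicit densities; the jump height $q/p=q'/p'$ together with coprimality gives $\beta=\beta'$. Your last step is backwards here: the jump at $\{\beta'\}$ never vanishes, and matching it is what identifies $\beta'$.

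If $\beta$ is irrational, Huang--Wang gives $m\beta'=n\beta+1$ (or the reverse) with $n\beta=a+\frac{b}{n\beta}$. This lets the paper list $O_{T_{\beta\circ n}}(1)$ explicitly (at most three points) and check that $T_{\beta'\circ m}(1)$ is not in it.

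Since Huang--Wang concerns distinct bases, the alternative $n\beta=m\beta'$ with $n\neq m$ also has to be excluded separately. The paper's write-up passes over this case as well.
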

Note that if $\beta,\beta^\prime$ are integers, then all four component measures, $\mu_{\beta\circ n},\mu_{\beta^\prime\circ m},\mu_{n\circ\beta},\mu_{m\circ\beta^\prime} $, are  Lebesgue measure so that $\mu_{(\beta,n)}=\mu_{(\beta^\prime,m)}$.

\section{The component compositions:}
Throughout the rest, we consider pairs $(\beta,n)$ with $\beta>1$ a non-integer real number and $n\geq 2$ an integer. In this case, the invariant measure $\mu_{(\beta,n)}$ of $K_{(\beta,n)}$  is given by
\[
\mu_{(\beta,n)}\Big(\{0\}\times A\cup \{1\}\times B\Big)=\frac{1}{2}\mu_{n\circ\beta}(A) +\frac{1}{2}\mu_{\beta\circ n}(B)
\]
with $\mu_{n\circ\beta},\,\mu_{\beta\circ n}$ the invariant measure for the compositions $T_{n\circ \beta}:=T_n\circ T_\beta$ and $T_{\beta\circ n}:=T_\beta\circ T_n$ respectively.

An easy calculation shows that $T_{n\circ \beta}(x)=n\beta \mod 1$, and $T_{\beta\circ n}$ is piecewise linear with constant slope $\beta n$ and underlying fundamental partition (of rank 1) given by
\[
\mathcal{I}_{\beta\circ n}=\left\{\left[\frac{k}{n}+\frac{l}{n\beta},\frac{k}{n}+\frac{l+1}{n\beta}\right), \left[\frac{k}{n}+\frac{\lfloor \beta \rfloor
}{n\beta},\frac{k+1}{n}\right): k=0,\ldots, n-1 \text{ and } l=0,\ldots, \lfloor \beta \rfloor -1 \right\},
\]
\\
see Figure \ref{fig:partitions}.
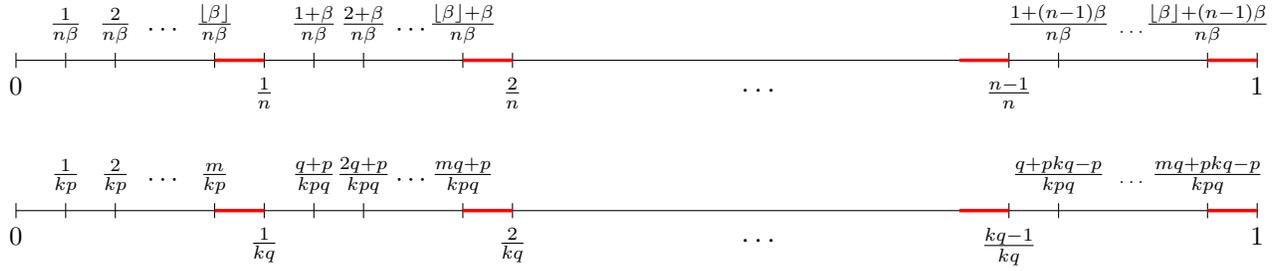
\begin{figure}[thb]
\center{\begin{tikzpicture}[xscale=1.65]
\draw(0,0)--(10,0);

\draw(0,-0.1)node[below]{$0$}--(0,0.1);
\draw(10,-0.1)node[below]{$1$}--(10,0.1);

\draw(2,-0.1)node[below]{$\frac{1}{n}$}--(2,0.1);
\draw(4,-0.1)node[below]{$\frac{2}{n}$}--(4,0.1);
\draw(8,-0.1)node[below]{$\frac{n-1}{n}$}--(8,0.1);
\node[below] at (6,-0.2){\large $\cdots$};

\draw(0.4,-0.1)--(0.4,0.1)node[above]{$\frac{1}{n\beta}$};
\draw(0.8,-0.1)--(0.8,0.1)node[above]{$\frac{2}{n\beta}$};
\draw(1.6,-0.1)--(1.6,0.1)node[above]{$\frac{\lfloor\beta \rfloor}{n\beta}$};
\node[above] at (1.2,0.2){ $\cdots$};

\draw(2.4,-0.1)--(2.4,0.1)node[above]{$\frac{1+\beta}{n\beta}$};
\draw(2.8,-0.1)--(2.8,0.1)node[above]{$\frac{2+\beta}{n\beta}$};
\draw(3.6,-0.1)--(3.6,0.1)node[above]{$\frac{\lfloor\beta \rfloor+\beta}{n\beta}$};
\node[above] at (3.2,0.2){ $\cdots$};

\draw(8.4,-0.1)--(8.4,0.1)node[above]{$\frac{1+(n-1)\beta}{n\beta}$};
\draw(9.6,-0.1)--(9.6,0.1)node[above]{$\frac{\lfloor\beta \rfloor+(n-1)\beta}{n\beta}$};
\node[above] at (9,0.2){\tiny $\cdots$};

\draw[line width=0.45mm, red ] (1.6,0)--(2,0);
\draw[line width=0.45mm, red ] (3.6,0)--(4,0);
\draw[line width=0.45mm, red ] (9.6,0)--(10,0);
\draw[line width=0.45mm, red ] (7.6,0)--(8,0);


\draw(0,-2)--(10,-2);
\draw(0,-2.1)node[below]{$0$}--(0,-1.9);
\draw(10,-2.1)node[below]{$1$}--(10,-1.9);

\draw(2,-2.1)node[below]{$\frac{1}{kq}$}--(2,-1.9);
\draw(4,-2.1)node[below]{$\frac{2}{kq}$}--(4,-1.9);
\draw(8,-2.1)node[below]{$\frac{kq-1}{kq}$}--(8,-1.9);
\node[below] at (6,-2.2){\large $\cdots$};

\draw(0.4,-2.1)--(0.4,-1.9)node[above]{$\frac{1}{kp}$};
\draw(0.8,-2.1)--(0.8,-1.9)node[above]{$\frac{2}{kp}$};
\draw(1.6,-2.1)--(1.6,-1.9)node[above]{$\frac{m}{kp}$};
\node[above] at (3.2,-1.8){ $\cdots$};

\draw(2.4,-2.1)--(2.4,-1.9)node[above]{$\frac{q+p}{kpq}$};
\draw(2.8,-2.1)--(2.8,-1.9)node[above]{$\frac{2q+p}{kpq}$};
\draw(3.6,-2.1)--(3.6,-1.9)node[above]{$\frac{mq+p}{kpq}$};
\node[above] at (1.2,-1.8){ $\cdots$};

\draw(8.4,-2.1)--(8.4,-1.9)node[above]{$\frac{q+pkq-p}{kpq}$};
\draw(9.6,-2.1)--(9.6,-1.9)node[above]{$\frac{mq+pkq-p}{kpq}$};
\node[above] at (9,-1.8){\tiny $\cdots$};

\draw[line width=0.45mm, red ] (1.6,-2)--(2,-2);
\draw[line width=0.45mm, red ] (3.6,-2)--(4,-2);
\draw[line width=0.45mm, red ] (9.6,-2)--(10,-2);
\draw[line width=0.45mm, red ] (7.6,-2)--(8,-2);

\end{tikzpicture}}
\caption{Top: $\mathcal{I}_{\beta\circ n}$ for general $\beta$, bottom: $\mathcal{I}_{\frac{p}{q}\circ kq}$  where $\lfloor\frac{p}{q}\rfloor=m$. Note that the red intervals are the only ones with a non-full branch.}
\label{fig:partitions}
\end{figure}

The map $T_{\beta\circ n}$ is defined by \
\[
T_{\beta\circ n}(x)=
\begin{cases}
    \beta n x -(k\beta +l), & x\in \left[\frac{k}{n}+\frac{l}{n\beta},\frac{k}{n}+\frac{l+1}{n\beta}\right),\  k=0,\ldots, n-1 \text{ and } l=0,\ldots, \lfloor \beta \rfloor -1 \\
    \\
    \beta n x -(k\beta +\lfloor \beta \rfloor), & x\in \left[\frac{k}{n}+\frac{\lfloor \beta \rfloor }{n\beta},\frac{k+1}{n}\right) ,\  k=0,\ldots, n-1,
\end{cases}
\]
 see Figure \ref{fig:mapscomposition} for an example. 

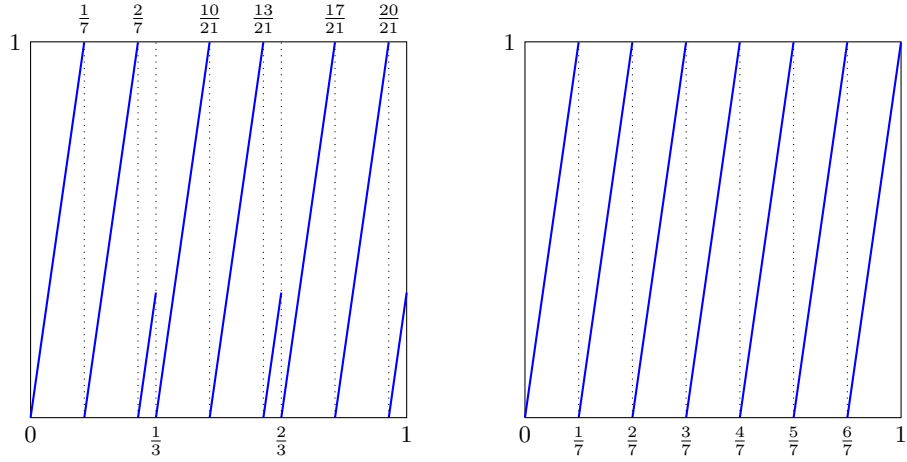
\begin{figure}[ht]
		\centering
		
		\subfigure{\begin{tikzpicture}[scale=5]
				\draw[white] (-0.25,0)--(1,0);
				\draw(0,0)node[below]{\small $0$}--(1,0)node[below]{\small $1$}--(1,1)--(0,1)node[left]{\small $1$}--(0,0);

				\draw[thick, blue, smooth, samples =20, domain=0:1/7] plot(\x,{7*\x)});
				\draw[thick,blue, smooth, samples =20, domain=1/7:2/7] plot(\x,{7*\x-1)});
                \draw[thick,blue, smooth, samples =20, domain=2/7:1/3] plot(\x,{7*\x-2)});
				\draw[thick,blue, smooth, samples =20, domain=1/3:10/21] plot(\x,{7*\x-7/3)});
		          \draw[thick,blue, smooth, samples =20, domain=10/21:13/21] plot(\x,{7*\x-7/3-1)});
                \draw[thick,blue, smooth, samples =20, domain=13/21:2/3] plot(\x,{7*\x-7/3-2)});
                \draw[thick,blue, smooth, samples =20, domain=2/3:17/21] plot(\x,{7*\x-14/3)});
                \draw[thick,blue, smooth, samples =20, domain=17/21:20/21] plot(\x,{7*\x-14/3-1)});
                \draw[thick,blue, smooth, samples =20, domain=20/21:1] plot(\x,{7*\x-14/3-2)});
                \draw[dotted](1/7,0)--(1/7,1)node[above]{\small $\frac{1}{7}$};
				\draw[dotted](2/7,0)--(2/7,1)node[above]{\small $\frac{2}{7}$};
                \draw[dotted](1/3,0)node[below]{\small $\frac{1}{3}$}--(1/3,1);
                \draw[dotted](10/21,0)--(10/21,1)node[above]{\small $\frac{10}{21}$};
                \draw[dotted](13/21,0)--(13/21,1)node[above]{\small $\frac{13}{21}$};
                \draw[dotted](2/3,0)node[below]{\small $\frac{2}{3}$}--(2/3,1);
                \draw[dotted](17/21,0)--(17/21,1)node[above]{\small $\frac{17}{21}$};
                \draw[dotted](20/21,0)--(20/21,1)node[above]{\small $\frac{20}{21}$};
		\end{tikzpicture}}
        \subfigure{\begin{tikzpicture}[scale=5]
				\draw[white] (-0.25,0)--(1,0);
				\draw(0,0)node[below]{\small $0$}--(1,0)node[below]{\small $1$}--(1,1)--(0,1)node[left]{\small $1$}--(0,0);

				\draw[thick, blue, smooth, samples =20, domain=0:1/7] plot(\x,{7*\x)});
				\draw[thick,blue, smooth, samples =20, domain=1/7:2/7] plot(\x,{7*\x-1)});
                \draw[thick,blue, smooth, samples =20, domain=2/7:3/7] plot(\x,{7*\x-2)});
				\draw[thick,blue, smooth, samples =20, domain=3/7:4/7] plot(\x,{7*\x-3)});
		          \draw[thick,blue, smooth, samples =20, domain=4/7:5/7] plot(\x,{7*\x-4)});
                \draw[thick,blue, smooth, samples =20, domain=5/7:6/7] plot(\x,{7*\x-5)});
                \draw[thick,blue, smooth, samples =20, domain=6/7:7/7] plot(\x,{7*\x-6)});
                \draw[dotted](1/7,0)node[below]{\small $\frac{1}{7}$}--(1/7,1);
				\draw[dotted](2/7,0)node[below]{\small $\frac{2}{7}$}--(2/7,1);
                \draw[dotted](3/7,0)node[below]{\small $\frac{3}{7}$}--(3/7,1);
                \draw[dotted](4/7,0)node[below]{\small $\frac{4}{7}$}--(4/7,1);
                \draw[dotted](5/7,0)node[below]{\small $\frac{5}{7}$}--(5/7,1);
                \draw[dotted](6/7,0)node[below]{\small $\frac{6}{7}$}--(6/7,1);
                
		\end{tikzpicture}}
		
		\caption{The map $T_{\beta\circ n}$  on the left and $T_{n\circ \beta}$ on the right for $\beta=\frac{7}{3}$ and $n=3$. }
		\label{fig:mapscomposition}
\end{figure}

The branches of $T_{\beta\circ n}$ on the intervals $\left[\frac{k}{n}+\frac{l}{n\beta},\frac{k}{n}+\frac{l+1}{n\beta}\right),\  k=0,\ldots, n-1 \text{ and } l=0,\ldots, \lfloor \beta \rfloor -1$ are full, i.e. $T_{\beta\circ n}\left([\frac{k}{n}+\frac{l}{n\beta},\frac{k}{n}+\frac{l+1}{n\beta})\right)=[0,1)$, while on the intervals $\left[\frac{k}{n}+\frac{\lfloor \beta \rfloor }{n\beta},\frac{k+1}{n}\right) ,\  k=0,\ldots, n-1$ they are non-full with $T_{\beta\circ n}(\left[\frac{k}{n}+\frac{\lfloor \beta \rfloor }{n\beta},\frac{k+1}{n}\right)=[0,\beta-\lfloor \beta \rfloor)\neq [0,1)$.
In  \cite{DK10}, an exact formula for the invariant measure of such maps was derived. Before applying it to our case, we need to introduce a few things. A fundamental interval of rank $k$, is an interval of the form 
\[
I=I_1\cap T_{\beta\circ n}^{-1}I_2\cap \ldots \cap T_{\beta\circ n}^{-(k-1)}I_k, \text{ where } I_1,\ldots, I_k\in \mathcal{I}_{\beta\circ n}.
\]
A fundamental interval $I$ of rank $k$ is said to be full if $T_{\beta\circ n}^k(I)=[0,1)$ and non-full otherwise. Let $D_k$ be the collection of all non-full intervals of rank $k$ that are not subsets of full intervals of lower rank. Set $\phi_0(x)=1$ and 
\[
\phi_k(x)=\sum_{E\in D_k} \frac{1}{(\beta n)^k} \mathds{1}_{T^k_{\beta\circ n}(E))}(x),\quad k\geq 1.
\]
Let $\phi(x)=\sum_{k=0}^\infty \phi_k(x)$. By Theorem 3.2 in \cite{DK10}, the density $f_{\beta\circ n}$ of the $T_{\beta\circ n}$-invariant measure absolutely continuous with respect to Lebesgue measure $\lambda$, is given by
\begin{equation}\label{eq:invardensity}
    f_{\beta\circ n}(x)= \frac{\phi(x)}{\int \phi(x) d\lambda(x)}. 
\end{equation}

Note that $f_{\beta\circ n}$ is piecewise constant with discontinuity points occurring at points in the set
\[
O_{T_{\beta\circ n}}(1):=\{T_{\beta\circ n}^k(1):k\geq 1 \}, 
\]
where $T_{\beta\circ n}(1)=\beta-\lfloor \beta \rfloor$ (here we used $T_n(1):=\lim_{x\nearrow 1}T_n(x)=1$). We apply this result to derive the invariant measure of the composition $T_{\frac{p}{q}}\circ T_{kq}=T_{\frac{p}{q}\circ kq}$ with $p>q>1$ relatively prime, and $k\geq 1$.

\begin{proposition}\label{prop:explicit density}
    Let $p>q>1$ be relatively prime, and write $p=mq+r$ with $m\in\mathbb{N}$ and $1\leq r\leq q-1$. Then for any $k\geq 1$, the $T_{\frac{p}{q}\circ kq}$-invariant measure has density 
    \[
    f_{\frac{p}{q}\circ kq}(x)=\frac{p-r}{p}\left(1+\frac{q}{p-r}\mathds{1}_{[0,\frac{r}{q})}(x)  \right),
     \]
     which is independent of $k$.
\end{proposition}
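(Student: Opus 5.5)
We apply Theorem 3.2 of \cite{DK10}, i.e.\ formula \eqref{eq:invardensity}, to $T:=T_{\frac{p}{q}\circ kq}$. This map has constant slope $kp$, since $\beta n=\frac{p}{q}\cdot kq$. The plan is to describe exactly the collections $D_j$ of non-full fundamental intervals, compute each $\phi_j$, sum the resulting geometric series, and normalize.

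\textbf{Step 1: the orbit of $1$.} With the convention $T_n(1)=1$, we have $T(1)=\frac{p}{q}-m=\frac{r}{q}$. The point $\frac{r}{q}=\frac{kr}{kq}$ is an endpoint of the partition $\{[\frac{j}{kq},\frac{j+1}{kq})\}$ of $T_{kq}$. Taking left limits, $T_{kq}(\frac{r}{q}^-)=1$, and hence $T(\frac{r}{q}^-)=\frac{r}{q}$ again. So the orbit of $1$ is the single point $\frac{r}{q}$. This already shows that $f$ is constant on $[0,\frac{r}{q})$ and on $[\frac{r}{q},1)$.

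\textbf{Step 2: the collections $D_j$, by induction on $j$.} We show that $D_j$ consists of exactly $kq(kr)^{j-1}$ intervals, each with $T^j(E)=[0,\frac{r}{q})$.

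For $j=1$, the non-full intervals in $\mathcal{I}_{\frac{p}{q}\circ kq}$ are the $kq$ red intervals of Figure \ref{fig:partitions}. Each has image $[0,\frac{p}{q}-m)=[0,\frac{r}{q})$.

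For the inductive step, let $E\in D_j$ with $T^j(E)=[0,\frac{r}{q})$. Then $T^j$ maps $E$ linearly and bijectively onto $[0,\frac{r}{q})$. This interval is an exact union of $kr$ consecutive intervals $[\frac{i}{kq},\frac{i+1}{kq})$. On each of these, $T_{kq}$ is onto $[0,1)$, and $T_{p/q}$ then splits it into $m$ full pieces and one non-full piece with image $[0,\frac{r}{q})$. Pulling back, the rank-$(j+1)$ subintervals of $E$ are therefore of two kinds. There are $km$ full ones, which are never refined again. There are $kr$ non-full ones, each with $T^{j+1}$-image $[0,\frac{r}{q})$.

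These $kr$ non-full subintervals sit inside $E$ and inside its non-full ancestors, so none of them lies in a full interval of lower rank. Moreover, every non-full rank-$(j+1)$ interval not contained in a lower-rank full interval must arise this way. This gives the count and images for $D_{j+1}$.

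The main point to get right here is that $\frac{r}{q}$ lands exactly on a breakpoint of $T_{kq}$. This uses $n=kq$ essentially: $kq\cdot\frac{r}{q}=kr\in\mathbb{Z}$. It is what makes the picture self-similar and independent of the position of $E$.

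\textbf{Step 3: sum and normalize.} For $j\ge1$,
\[
\phi_j=\frac{kq(kr)^{j-1}}{(kp)^j}\mathds{1}_{[0,\frac{r}{q})}=\frac{q}{p}\Big(\frac{r}{p}\Big)^{j-1}\mathds{1}_{[0,\frac{r}{q})}.
\]
Since $r<p$, the series converges, and
\[
\phi=1+\frac{q}{p}\cdot\frac{p}{p-r}\mathds{1}_{[0,\frac{r}{q})}=1+\frac{q}{p-r}\mathds{1}_{[0,\frac{r}{q})}.
\]
Then
\[
\int\phi\,d\lambda=1+\frac{q}{p-r}\cdot\frac{r}{q}=\frac{p}{p-r}.
\]
Dividing $\phi$ by this constant gives the stated density. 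No factor of $k$ survives: the $k$'s cancel in each $\phi_j$, so the density is independent of $k$.
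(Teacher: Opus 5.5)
Your proof is correct and follows the paper's argument essentially step for step: apply formula \eqref{eq:invardensity}, show by induction that $D_j$ consists of $kq(kr)^{j-1}$ intervals, each mapped onto $[0,\frac{r}{q})$ (using that $\frac{r}{q}=\frac{kr}{kq}$ is a partition breakpoint), then sum the geometric series and normalize. Two small remarks: each $E\in D_j$ contains $krm$ full rank-$(j+1)$ subintervals, not $km$, but this is harmless since full pieces never enter $\phi$; and your normalizing constant $\frac{p-r}{p}$ is the right one, whereas the last display of the paper's proof inverts it by a typo.
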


\begin{proof}

We start with few observations. From $p=mq+r$, we get $\frac{p}{q}=m+\frac{r}{q}$. Since $\frac{r}{q}<1$, we have $\lfloor\frac{p}{q}\rfloor=m$. The map $T_{\frac{p}{q}\circ kq}$ is piecewise linear with underlying partition (fundamental intervals of rank 1) 
\[
\mathcal{I}_{\frac{p}{q}\circ kq}=\left\{\left[\frac{l}{kq}+\frac{j}{kp},\frac{l}{kq}+\frac{j+1}{kp}\right), \left[\frac{l}{kq}+\frac{m
}{kp},\frac{l+1}{kq}\right): l=0,\ldots, kq-1 \text{ and } j=0,\ldots, m-1\right\},
\]
see the bottom part of Figure \ref{fig:partitions}. 
All intervals of the form $\left[\frac{l}{kq}+\frac{j}{kp},\frac{l}{kq}+\frac{j+1}{kp}\right), l=0,\ldots kq-1 \text{ and } j=0,\ldots, m-1 $ are full while intervals of the form $\left[\frac{l}{kq}+\frac{m}{kp},\frac{l+1}{kq}\right), l=0,\ldots, kq-1 $ are non-full with $T_{\frac{p}{q}\circ kq}\left( \left[\frac{l}{kq}+\frac{m}{kp},\frac{l+1}{kq}\right)\right)=[0,\frac{r}{q}) $.

The set $D_1$ consists of $kq$ non-full intervals. Each of which is mapped to $[0,\frac{r}{q})$ and hence contains $rk$ non-full intervals of rank 2. Hence, $D_2$ consists of $rk^2q$ non-full intervals of rank 2. Each is mapped under $T_{\frac{p}{q}\circ kq}^2 $ to $[0,\frac{r}{q})$. Continuing this way, we see that $D_n$ consists of $r^{n-1}k^nq$ non-full intervals and each is mapped under $T_{\frac{p}{q}\circ kq}^n $ to  $[0,\frac{r}{q})$. Thus,
\[
\phi_n(x)=\frac{r^{n-1}k^nq}{(pk)^n}\mathds{1}_{[0,\frac{r}{q})}(x) =\frac{q}{p}\left(\frac{r}{p}\right)^{n-1} \mathds{1}_{[0,\frac{r}{q})}(x)
\]
which gives
\[
\phi(x)=\phi_0+\sum_{n=1}^\infty \phi_n(x)=1+\sum_{n=1}^\infty \frac{q}{p}\left(\frac{r}{p}\right)^{n-1} \mathds{1}_{[0,\frac{r}{q})}(x)= 1+\frac{q}{p}\frac{p}{p-r}\mathds{1}_{[0,\frac{r}{q})}(x)=1+\frac{q}{p-r}\mathds{1}_{[0,\frac{r}{q})}(x),
\]
and $\int \phi d\lambda(x)=1+\frac{q}{p-r}\frac{r}{q}=1+\frac{r}{p-r}=\frac{p}{p-r}$.
Therefore, 
\[
f_{\frac{p}{q}\circ kq}(x)=\frac{p}{p-r}\left(1+\frac{q}{p-r}\mathds{1}_{[0,\frac{r}{q})}(x) \right).
\]
\end{proof}

\begin{remark}\label{rem:otherdisc}
    Note that the invariant measure was found by using the orbit of 1. This is because all the other discontinuities map to the same point as 1. Here we use that $kq\in\mathbb{N}$. In general, for $T_{\beta_1\circ \beta_2}$, one should take into account not only the orbit of 1 but also the orbit of  $T_{\beta_1}(1)$.
\end{remark}

\section{Characterization of Coincidence of the measures}
In this section, we investigate the equality $\mu_{(\beta,n)}=\mu_{(\beta^\prime,m)}$ with $\beta,\beta^\prime>1$ non-integers and $n,m\geq 2$ integers. As a corollary of Proposition \ref{prop:explicit density}, we state our first result.

\begin{proposition}\label{prop:equal measure}
    Let $p>q>1$ be relatively prime integers. For any $k,l\geq 1$, we have $\mu_{(\frac{p}{q},kq)}=\mu_{(\frac{p}{q},lq)}$.
\end{proposition}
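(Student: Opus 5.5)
By the decomposition recalled in Section 2, $\mu_{(\frac{p}{q},kq)}=\mu_{(\frac{p}{q},lq)}$ holds if and only if both component measures coincide:
\[
\mu_{kq\circ\frac{p}{q}}=\mu_{lq\circ\frac{p}{q}} \quad\text{and}\quad \mu_{\frac{p}{q}\circ kq}=\mu_{\frac{p}{q}\circ lq}.
\]
I will check these two equalities separately.

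\textbf{The component on level $1$.} Proposition \ref{prop:explicit density} gives an explicit density for $f_{\frac{p}{q}\circ kq}$ that depends only on $p,q$ and $r$, where $p=mq+r$, and not on $k$. Applying it with $k$ and with $l$ gives the same density. Hence $\mu_{\frac{p}{q}\circ kq}=\mu_{\frac{p}{q}\circ lq}$ immediately.

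\textbf{The component on level $0$.} Here I look at $T_{kq\circ\frac{p}{q}}=T_{kq}\circ T_{\frac{p}{q}}$. Write $x\mapsto \frac{p}{q}x-j$ with $j=\lfloor \frac{p}{q}x\rfloor$, then multiply by $kq$ and reduce mod $1$. This gives
\[
T_{kq}\bigl(T_{\frac pq}(x)\bigr)=kq\Bigl(\tfrac{p}{q}x-j\Bigr) \bmod 1 = kpx-kqj \bmod 1 = kpx \bmod 1,
\]
since $kqj$ is an integer. So $T_{kq\circ\frac{p}{q}}=T_{kp}$, which is the integer map $x\mapsto kpx \bmod 1$. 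This is consistent with the paper's remark that $T_{n\circ\beta}(x)=n\beta x\bmod 1$.

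The map $T_{kp}$ has only full branches, so Lebesgue measure is invariant for it. By uniqueness of the absolutely continuous invariant measure (the map is exact), $\mu_{kq\circ\frac{p}{q}}=\lambda$. The same argument gives $\mu_{lq\circ\frac{p}{q}}=\lambda$, so the level-$0$ components agree.

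Combining the two levels gives $\mu_{(\frac{p}{q},kq)}=\mu_{(\frac{p}{q},lq)}$. There is no real obstacle. The only point needing care is the identity $T_{kq}\circ T_{\frac pq}=T_{kp}$, in particular that the integer shift $kqj$ disappears mod $1$, together with the uniqueness of the acim.

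I also note a harmless inconsistency: the displayed normalization constant in the final line of the proof of Proposition \ref{prop:explicit density} is inverted relative to the statement. It is irrelevant here, since both sides are given by the same formula and so agree for all $k$ regardless of the constant.
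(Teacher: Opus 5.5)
Your proposal is correct and follows the paper's proof: you reduce to the two component measures, use the $k$-independence of the density in Proposition~\ref{prop:explicit density} for $\mu_{\frac{p}{q}\circ kq}$, and observe that $T_{kq\circ\frac{p}{q}}(x)=kpx \bmod 1$ preserves Lebesgue measure. Your remark about the normalizing constant is also accurate: the paper's proof of this proposition repeats the inverted constant $\frac{p}{p-r}$, and as you note this does not affect the argument.
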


\begin{proof}
As we have seen earlier $\mu_{(\frac{p}{q},kq)}=\mu_{(\frac{p}{q},lq)}$ if and only if  $\mu_{ kq \circ \frac{p}{q}}=\mu_{ lq  \circ\frac{p}{q}} $ and $\mu_{\frac{p}{q}\circ kq}=\mu_{\frac{p}{q}\circ lq} $. Since $(\frac{p}{q})(kp)=kp$ and $(\frac{p}{q})(lp)=lp$ are both integers, we see that $T_{kq\circ \frac{p}{q}}(x)=kp\mod 1$ and $T_{lq\circ \frac{p}{q}}(x)=lp\mod 1$, and both preserve Lebesgue measure $\lambda$. From Proposition \ref{prop:explicit density}, the densities $f_{\frac{p}{q}\circ kq}(x)$ and $f_{\frac{p}{q}\circ lq}(x)$ are equal to 
\[
f_{\frac{p}{q}\circ kq}(x)=f_{\frac{p}{q}\circ lq}(x)=\frac{p}{p-r}\left(1+\frac{q}{p-r}\mathds{1}_{[0,\frac{r}{q})}(x) \right).
\]
Thus $\mu_{\frac{p}{q}\circ kq}(x)=\mu_{\frac{p}{q}\circ lq}(x)$ and $\mu_{ kq \circ \frac{p}{q}}=\mu_{ lq  \circ\frac{p}{q}}=\lambda $ and therefore,  $\mu_{(\frac{p}{q},kq)}=\mu_{(\frac{p}{q},lq)}$.
\end{proof}

\begin{theorem}\label{th:betarat}
    Let $\beta=\frac{p}{q}$ with $p>q>1$ relatively prime and $n\geq 2$ integer. Then for any $m\geq 2$ and $\beta^{\prime}>1$, we have
    $\mu_{(\beta,n)}=\mu_{(\beta^{\prime},m)}$ if and only if $\beta^\prime=\beta=\frac{p}{q}$ and $n,m\in\{kq:k\in\mathbb{N} \}$.
\end{theorem}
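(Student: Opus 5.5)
The plan is to split the condition $\mu_{(\beta,n)}=\mu_{(\beta^\prime,m)}$ into its two components, $\mu_{n\circ\beta}=\mu_{m\circ\beta^\prime}$ and $\mu_{\beta\circ n}=\mu_{\beta^\prime\circ m}$, and to extract arithmetic information from each. The ``if'' direction is Proposition \ref{prop:equal measure}. The pair $(\beta^\prime,m)=(\beta,n)$ is trivially allowed for every $n$, so I read the statement as concerning pairs $(\beta^\prime,m)\neq(\beta,n)$ and will keep this exception in view throughout.

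\textbf{Step 1 (the first component).} Since $T_{n\circ\beta}(x)=n\beta x \bmod 1$, its invariant measure is $\lambda$ when $n\beta\in\mathbb{N}$ and the R\'enyi--Parry measure $\mu_{n\beta}$ otherwise; the same holds for $m\beta^\prime$. If $n\beta$ is a non-integer, the Parry density has a genuine jump at $T_{n\beta}(1)=\{n\beta\}$, so it is not Lebesgue. Hence $n\beta\in\mathbb{N}$ if and only if $m\beta^\prime\in\mathbb{N}$.
\begin{itemize}
\item If $q\mid n$, then $n\beta\in\mathbb{N}$, so $m\beta^\prime=:N\in\mathbb{N}$ and $\beta^\prime=p^\prime/q^\prime$ is rational with $q^\prime\mid m$.
\item If $q\nmid n$, then $n\beta=np/q$ is a rational non-integer. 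A rational root of $x^2=ax+b$ with integer coefficients is an integer, so the exceptional case of Theorem \ref{th:HuangWang} cannot occur, and we get $m\beta^\prime=n\beta$.
\end{itemize}

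\textbf{Step 2 (the case $q\mid n$).} Here both $\beta$ and $\beta^\prime$ fall under Proposition \ref{prop:explicit density}. The density of $\mu_{\beta\circ n}$ is constant off $[0,r/q)$, with its only jump at $r/q$. Likewise the density of $\mu_{\beta^\prime\circ m}$ has its only jump at $r^\prime/q^\prime$. Equality of the measures forces $r/q=r^\prime/q^\prime$. Both fractions are reduced, because $\gcd(r,q)=\gcd(p,q)=1$, so $q=q^\prime$ and $r=r^\prime$. Comparing the values of the normalized densities on $[0,r/q)$ then gives $p=p^\prime$. Therefore $\beta^\prime=\beta$ and $q\mid m$, as claimed.

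\textbf{Step 3 (the case $q\nmid n$, the main obstacle).} The aim is to show that no $(\beta^\prime,m)\neq(\beta,n)$ works. From the second component, the first discontinuity of $f_{\beta\circ n}$ lies at $T_{\beta\circ n}(1)=\beta-\lfloor\beta\rfloor$, which gives $\{\beta^\prime\}=\{\beta\}$, so $\beta^\prime=\beta+j$ with $j\in\mathbb{Z}$. Combined with $m\beta^\prime=n\beta$ this yields $(n-m)\beta=mj$. If $j=0$ we get the trivial pair; otherwise $\beta=mj/(n-m)$ with $m<n$. Excluding this case is the hard part. Since $q\nmid n$, Remark \ref{rem:otherdisc} says the density of $T_{\beta\circ n}$ also sees the orbit of $T_\beta(1)$ under the other branches, namely the images $\{k\beta\}$ of the points $k/n$. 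So I would compute the discontinuity set of $f_{\beta\circ n}$: it is the finite union of the orbits of $\{k\beta\}$, $k=1,\ldots,n$, all lying in $\frac1q\mathbb{Z}$. I would compare it with that of $f_{\beta^\prime\circ m}$, which has only $m<n$ such starting points, together with the jump sizes of order $(\beta n)^{-1}$ given by the formula of \cite{DK10}. The first thing I would try is to show that the jump at $\{\beta\}$, or the number of distinct jump points, differs between the two densities. If that fails, I would compare the densities near $0^+$, where the \cite{DK10} sums are easiest to evaluate.
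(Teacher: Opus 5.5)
Your Steps 1 and 2 are sound. Step 2 is essentially the paper's argument for $q\mid n$: locate the single jump at $r/q$, then compare heights. If $\beta'\in\mathbb{N}$, just note that $f_{\beta'\circ m}\equiv 1$.

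Step 1 is more careful than the paper. Theorem~\ref{th:HuangWang} only describes coincidences between \emph{distinct} bases, so for $q\nmid n$ it yields $m\beta'=n\beta$, not a contradiction. The paper overlooks this when it concludes $np/q\in\mathbb{N}$. That oversight is also why the statement, read literally, fails for the trivial pair you flagged.

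The genuine gap is Step 3. The case $q\nmid n$, $m\beta'=n\beta$, $(\beta',m)\neq(\beta,n)$ is left as a list of things to try, and the ingredients you propose are wrong:
\begin{itemize}
\item Remark~\ref{rem:otherdisc} concerns a non-integer \emph{inner} base. For integer $n$ one has $T_n(x)\to 1$ as $x\nearrow k/n$, so every non-full branch of $T_{\beta\circ n}$ ends at $\{\beta\}$. The density therefore involves only the single orbit $x_i=T^i_{\beta\circ n}(1)$; the points $\{k\beta\}$ play no role.
\item This orbit is not contained in $\frac1q\mathbb{Z}$. For $\beta=\frac32$, $n=3$ it is $\frac12,\frac34,\frac38,\frac3{16},\frac{27}{32},\dots$.
\item Equality of jump sets only places $\{\beta'\}$ somewhere in this orbit. $\{\beta\}$ is not distinguished by its position (here $\frac34>\frac12$), so your deduction $\{\beta'\}=\{\beta\}$ is unjustified.
\end{itemize}

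What does single out $\{\beta\}$ is the size of its jump. Pick a prime $\ell$ with $v_\ell(q)>v_\ell(n)$, where $v_\ell$ is the $\ell$-adic valuation. Write $T_{\beta\circ n}(x)=\gamma x-(k\beta+l)$ with $\gamma=n\beta$, $k=\lfloor nx\rfloor$, $l=\lfloor\beta\{nx\}\rfloor$, and let $k_i,l_i$ be these values at $x_i$. Then $v_\ell(x_{i+1})=v_\ell(x_i)-v_\ell(q)+v_\ell(n)$, starting from $v_\ell(x_1)=-v_\ell(q)$. Since partition points have $v_\ell\ge -v_\ell(n)$, the $x_i$ are pairwise distinct points of $(0,1)$ that never hit a partition point.

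It follows that each interval of $D_j$ with image $[0,x_i)$ has exactly one child in $D_{j+1}$ with image $[0,x_{i+1})$, and $k_i$ children with image $[0,x_1)$. Summing the series of \cite{DK10} gives $\phi=1+w\sum_{i\ge1}\gamma^{1-i}\mathds{1}_{[0,x_i)}$, where
\[
1/w=\beta\bigl(1-\textstyle\sum_{i\ge1}k_i\gamma^{-i}\bigr)=\lfloor\beta\rfloor+\textstyle\sum_{i\ge1}l_i\gamma^{-i}.
\]
The second equality comes from telescoping $x_1=\sum_{i\ge1}(k_i\beta+l_i)\gamma^{-i}$. Since $l_i\le\lfloor\beta\rfloor<\gamma-1$, this gives $\lfloor 1/w\rfloor=\lfloor\beta\rfloor$.

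The same holds for $(\beta',m)$, because $q'\mid m$ would make $\gamma=m\beta'$ an integer. The jumps are $w\gamma^{1-i}/\int\phi$ at distinct points, so the largest jump of the density sits at $\{\beta\}$. This forces $\{\beta'\}=\{\beta\}$. The ratio of that jump to $f(1^-)$ is $w$, which forces $\lfloor\beta'\rfloor=\lfloor\beta\rfloor$. Hence $\beta'=\beta$ and $m=n$, so only the trivial pair survives.
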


\begin{proof}
From Proposition \ref{prop:equal measure}, we see that $\mu_{(\frac{p}{q},kq)}=\mu_{(\frac{p}{q},lq)} $ for any $k,l\geq 1$. We prove the converse. Suppose $\mu_{(\frac{p}{q},n)}=\mu_{(\beta^\prime,m)}$. Then $\mu_{\frac{p}{q}\circ n}=\mu_{\beta^\prime\circ m}$ and $\mu_{n \circ\frac{p}{q}}=\mu_{ m\circ \beta^\prime}$. As was mentioned earlier $T_{n\circ \frac{p}{q}}(x)=T_{\frac{np}{q}}=\frac{np}{q}x \mod 1$ and $T_{m\circ \beta^\prime}(x)=T_{m\beta^\prime}=m\beta^\prime x \mod 1$. We first show that this implies that $\frac{np}{q}\in \mathbb{N}$. This is done by contradiction. Assume $\frac{np}{q}\notin \mathbb{N}$, then by the Theorem \ref{th:HuangWang} of Huang and Wang from \cite{HW25}, $\frac{np}{q}$ satisfies a quadratic equation of the form $x^2=ax+b$ with $a\geq b\geq 1$.
From the Rational root Theorem we get that, when we write $\frac{np}{q}=\frac{r}{s}$ in its simplest form, $s$ must be 1.
Thus, $\frac{np}{q}\in\mathbb{N}$, and since $p$ and $q$ are relatively prime, we have $n=kq$ for some $k\geq 1$. Since $T_{n\circ \frac{p}{q}}(x)=\frac{np}{q}x \mod 1$ and $\frac{np}{q}\in\mathbb{N}$, we see that the Lebesgue measure $\lambda$ is $T_{n\circ \frac{p}{q}}$-invariant, and hence $\lambda$ is also $T_{m\circ \beta}=T_{m\beta}$-invariant. This is only possible if $m\beta\in\mathbb{N}$. Thus $\beta=\frac{p^\prime}{q^\prime}\in\mathbb{Q}$ with $p^\prime>q^\prime\geq 1$ are relatively prime, and $m=lq^\prime$ for some $l\geq 1$. With this conclusion we turn to the second equality $\mu_{\frac{p}{q}\circ kq}=\mu_{\frac{p^\prime}{q^\prime}\circ lq^\prime}$. We write
$p=rq+t$ and $p^\prime=r^\prime q^\prime+t^\prime$, $r,r^\prime \geq 1, 1\leq t\leq q-1, 1\leq t^\prime\leq q^\prime-1$. From Proposition \ref{prop:explicit density}, the invariant densities of the maps $T_{\frac{p}{q}\circ kq}$ and $T_{\frac{p^\prime}{q^\prime}\circ lq^\prime}$ are given by \
\[
f_{\frac{p}{q}\circ kq}(x)=\frac{p-t}{p}\left(1+\frac{q}{p-t}\mathds{1}_{[0,\frac{t}{q})}(x) \right)=\frac{p-t}{p}+\frac{q}{p}\mathds{1}_{[0,\frac{t}{q})}(x),
\]
and
\[
f_{\frac{p^\prime}{q^\prime}\circ lq^\prime}(x)=\frac{p^\prime-t^\prime}{p^\prime}\left(1+\frac{q^\prime}{p^\prime-t^\prime}\mathds{1}_{[0,\frac{t^\prime}{q^\prime})}(x) \right)=\frac{p^\prime-t^\prime}{p^\prime}+\frac{q^\prime}{p^\prime}\mathds{1}_{[0,\frac{t^\prime}{q^\prime})}(x) .
\]
Since $f_{\frac{p}{q}\circ kq}(x)=f_{\frac{p^\prime}{q^\prime}\circ lq^\prime}(x)$ this gives us $\frac{q}{p}=\frac{q^\prime}{p^\prime}$ and since $p$ and $q$ are relatively prime and $p^\prime$ and $q^\prime$ as well, we find $p=p^\prime$ and $q=q^\prime$. 
Thus $\beta^\prime=\frac{p}{q}=\beta$, $n=kq$ and $m=lq$ as required. 
\end{proof}

We now deal with the case $\beta\notin\mathbb{Q}$. We do this in several steps.

\begin{theorem}\label{th:nequalm}
    Let $\beta>1$ be irrational and $n\geq 2$ an integer. Then, 
    for any $\beta^{\prime}>1$ with $\beta \neq \beta^\prime$, one has $\mu_{(\beta,n)}\not=\mu_{(\beta^\prime,n)}$.
\end{theorem}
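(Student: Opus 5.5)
The plan is to argue by contradiction. Assume $\mu_{(\beta,n)}=\mu_{(\beta^\prime,n)}$ with $\beta$ irrational and $\beta\neq\beta^\prime$. Then both $\mu_{n\circ\beta}=\mu_{n\circ\beta^\prime}$ and $\mu_{\beta\circ n}=\mu_{\beta^\prime\circ n}$ hold. The first equality should pin $\beta^\prime$ down to a single candidate, and the second should then fail for that candidate.

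\emph{Step 1: the first equality.} It says that the R\'enyi--Parry measures of $\gamma:=n\beta$ and $\gamma^\prime:=n\beta^\prime$ coincide, since $T_{n\circ\beta}=T_{n\beta}$. Because $\beta$ is irrational, $\gamma\notin\mathbb{N}$, so the Parry density of $T_\gamma$ is not constant. Hence $\gamma^\prime\notin\mathbb{N}$ as well, since otherwise $\mu_{\gamma^\prime}=\lambda$. Also $\gamma\neq\gamma^\prime$. Theorem \ref{th:HuangWang} then applies: after possibly interchanging the roles of $\beta$ and $\beta^\prime$, $\gamma^2=a\gamma+b$ with $a\geq b\geq 1$ and $\gamma^\prime=\gamma+1$. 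Thus $\beta^\prime=\beta+\frac1n$, and $\gamma$ is a quadratic irrational. In that case $T_\gamma(1)=b/\gamma$ and $T_\gamma^2(1)=0$. So the common measure $\nu:=\mu_{\gamma}=\mu_{\gamma^\prime}$ has a density $g$ with exactly one jump: $g=c\,(1+\mathds{1}_{[0,b/\gamma)})$, up to the precise constant in front of the indicator.

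\emph{Step 2: transfer to the second component.} I will not use the formula of \cite{DK10} directly. Instead I use the intertwining relation
\[
T_\beta\circ(T_n\circ T_\beta)=(T_\beta\circ T_n)\circ T_\beta .
\]
It shows that $\nu\circ T_\beta^{-1}$ is an absolutely continuous $T_{\beta\circ n}$-invariant probability measure. By uniqueness (the maps are exact), $\mu_{\beta\circ n}=\nu\circ T_\beta^{-1}$, and likewise $\mu_{\beta^\prime\circ n}=\nu\circ T_{\beta^\prime}^{-1}$. The transfer operator then gives an explicit density
\[
h_\beta(y)=\frac1\beta\sum_{j=0}^{\lfloor\beta\rfloor-1} g\!\left(\frac{y+j}{\beta}\right)+\frac1\beta\, g\!\left(\frac{y+\lfloor\beta\rfloor}{\beta}\right)\mathds{1}_{[0,\beta-\lfloor\beta\rfloor)}(y),
\]
and the analogous $h_{\beta^\prime}$. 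Both are step functions with finitely many explicitly computable jumps. The candidate jump points are $\{\beta\}$ and the points $\beta\cdot\frac{b}{\gamma}-j=\frac bn-j$ inside $[0,1)$. For $\beta^\prime$ they are $\{\beta^\prime\}=\{\beta+\frac1n\}$ and $\beta^\prime\cdot\frac{b^\prime}{\gamma^\prime}-j$, where $b^\prime/\gamma^\prime$ denotes the jump point of $g$ read through $\gamma^\prime=\gamma+1$. Since $g$ is the same function, that point is again $b/\gamma$, so the second family becomes $\frac{b(\beta+1/n)}{\gamma}-j=\frac bn+\frac{b}{n\gamma}-j$.

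\emph{Step 3: compare jumps (main obstacle).} The key point is that $\{\beta\}$ is irrational while the points $\frac bn-j$ are rational. So the jump of $h_\beta$ at $\{\beta\}$ cannot cancel against the other jump, and it has height $\frac1\beta g(1^-)\neq0$. For $h_{\beta^\prime}$, the candidate jump points are $\{\beta\}+\frac1n \pmod 1$ and $\frac bn+\frac{b}{n\gamma}-j$.

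For $h_\beta=h_{\beta^\prime}$ we need $\{\beta\}$ to be a jump point of $h_{\beta^\prime}$. It is not congruent to $\{\beta\}+\frac1n$ modulo $1$. So we would need $\{\beta\}=\frac bn+\frac b{n\gamma}-j$, i.e. $\beta-\frac{b}{\gamma n}\in\frac1n\mathbb{Z}$. Using $\beta=\gamma/n$, this says $\gamma-\frac b\gamma\in\mathbb{Z}$, which is true since $\gamma-\frac b\gamma=a$. So this coincidence can genuinely occur, and the comparison has to be done more carefully. This bookkeeping is the step I expect to be hardest.

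In that case I will compare the full jump data: the sizes of the jumps and the second point $\{\beta\}+\frac1n$, which must then itself be a jump of $h_\beta$. That forces $\{\beta\}+\frac1n\equiv\frac bn-j$, and hence $\beta\in\mathbb{Q}$, a contradiction. If the degenerate situation $\{\beta\}+\frac1n\geq1$ or a boundary coincidence still leaves a loophole, I will fall back on comparing $h_\beta$ and $h_{\beta^\prime}$ near $y=0^+$ and $y=1^-$. Both values are explicit in $c,\beta,n,b$. Equating them gives polynomial identities in $\gamma$ that are incompatible with $\gamma^2=a\gamma+b$ and the irrationality of $\gamma$.
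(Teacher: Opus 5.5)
Your route is correct in substance and genuinely different from the paper's. The paper works with $T_{\beta\circ n}$ directly through the formula of \cite{DK10}. Writing $(n\beta)^2=p(n\beta)+q$, it computes the orbit $O_{T_{\beta\circ n}}(1)$ explicitly in five cases, according to the relative sizes of $n$, $p$, $q$. It then splits according to whether $\lfloor\beta+\frac1n\rfloor$ equals $\lfloor\beta\rfloor$ or $\lfloor\beta\rfloor+1$, and checks in each case that $T_{(\beta+\frac1n)\circ n}(1)\notin O_{T_{\beta\circ n}}(1)$.

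You instead write $\mu_{\beta\circ n}=\nu\circ T_\beta^{-1}$ and $\mu_{\beta'\circ n}=\nu\circ T_{\beta'}^{-1}$ for the \emph{same} Parry measure $\nu$. This is exactly what $K_{(\beta,n)}$-invariance of $\mu_{(\beta,n)}$ encodes. Both densities then become explicit push-forwards of a step function with a single jump. This removes the case analysis and the appeal to \cite{DK10}, and it shows why the result holds: every jump of $h_\beta$ other than $\{\beta\}$ lies in $\frac bn+\mathbb{Z}\subset\mathbb{Q}$. It also covers $m\neq n$ (Theorem~\ref{th:betairr}) with no extra work, since $\{\beta'\}=\{\beta\}$ together with $m\beta'=n\beta+1$ would force $n\beta\in\mathbb{Q}$. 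In exchange, the paper's computation yields the explicit orbit sets, which it reuses in Theorem~\ref{th:betairr}.

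Two points in Step~3 need repair.

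\emph{The coincidence computation.} Since $\beta-\frac{b}{n\gamma}=\frac an$, the exact condition for $\{\beta\}$ to be a $g$-jump of $h_{\beta'}$ is $\beta-\frac{b}{n\gamma}\in\frac bn+\mathbb{Z}$, i.e.\ $n\mid a-b$. You replaced it by the weaker condition $\beta-\frac{b}{n\gamma}\in\frac1n\mathbb{Z}$, which always holds. So the coincidence does not always occur. In any case this branch is unnecessary: go straight to $\{\beta'\}$.

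\emph{The jump at $\{\beta'\}$.} This is the real omission: you never show that $\{\beta'\}$ is an actual discontinuity of $h_{\beta'}$. Your irrational-versus-rational argument works for $h_\beta$, but not for $h_{\beta'}$. The other candidate jumps of $h_{\beta'}$, namely $\frac bn+\frac b{n\gamma}-j$, are irrational too and can coincide with $\{\beta'\}$. This happens exactly when $n\mid a+1-b$; for example, with $n=2$ and $\gamma=1+\sqrt2$ one gets $\{\beta'\}=\frac{\sqrt2}{2}=\beta'\frac b\gamma$.

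The fix is one line. Since $g\geq 0$ is non-increasing, every summand of $h_{\beta'}$ is non-increasing in $y$, including $g\bigl(\frac{y+\lfloor\beta'\rfloor}{\beta'}\bigr)\mathds{1}_{[0,\{\beta'\})}(y)$. Hence jumps cannot cancel, and $h_{\beta'}$ drops by at least $g(1^-)/\beta'>0$ at $\{\beta'\}$. Now $\{\beta'\}$ is irrational and differs from $\{\beta\}$, because $\beta'-\beta=\frac1n\notin\mathbb{Z}$. So $h_\beta$ is constant near $\{\beta'\}$, which gives the contradiction. The fallback comparison at $0^+$ and $1^-$ is not needed.
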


\begin{proof}
Assume we can find a $\beta^\prime>1, \beta \neq \beta^\prime$ such that $\mu_{(\beta,n)}=\mu_{(\beta^\prime,n)}$. Then $ \mu_{\beta\circ n}=\mu_{\beta^\prime\circ n}$ and $\mu_{n\circ \beta}=\mu_{n\circ \beta^\prime}$. We start by studying the second equality. Since $T_{n\circ \beta}(x)=nx \,\mod1$ and $T_{n\circ \beta^\prime}(x)=n\beta^\prime x\,\mod1$,  then from Theorem \ref{th:HuangWang} of Huang and Wang from \cite{HW25}, we see that $n\beta$ satisfies $x^2=px+q$ with $p\geq q\ge1 $ and $n\beta^\prime=n\beta+1=n(\beta+\frac{1}{n})$. (Or with reverse roles but the proof is similar.) Thus $\beta^\prime=\beta+\frac{1}{n}$ and $n\beta=p+\frac{q}{n\beta}$. Then, $\lfloor n\beta \rfloor=p$, and since $\beta$ is irrational we have $\frac{p}{n}<\beta<\frac{p}{n}+\frac{1}{n}$. From the inequality $\beta>1$, we conclude that $\frac{p}{n}+\frac{1}{n}>1$ or that $n<p+1$. Hence $n\leq p$. If $p=n$, then $\lfloor \beta\rfloor=1=\frac{p}{n}=\lfloor \frac{p}{n}\rfloor$. If $n<p$, then we write $p=ns+t, s\geq 1$ and $0\leq t \leq n-1$. Thus $\frac{p}{n}=s+\frac{t}{n}$ and hence $s=\lfloor\frac{p}{n}\rfloor$. We now show that $\frac{p}{n}+\frac{1}{n}\leq \lfloor \frac{p}{n}\rfloor +1=s+1$. This is equivalent to $\frac{p}{n}-\lfloor\frac{p}{n} \rfloor \leq 1-\frac{1}{n}=\frac{n-1}{n}$, which is true since $\frac{p}{n}-\lfloor\frac{p}{n} \rfloor =\frac{t}{n}\leq \frac{n-1}{n} $. We now turn to the equality $ \mu_{\beta\circ n}=\mu_{\beta^\prime\circ n}$. From this we see that the densities $f_{\beta\circ n}$ and $f_{(\beta +\frac{1}{n})\circ n}$ must have the same discontinuities. To reach our contradiction we show that 
\[
T_{(\beta+\frac{1}{n})\circ n}(1)=\beta+\frac{1}{n}- \lfloor \beta+\frac{1}{n} \rfloor \notin O_{T_{\beta\circ n}}(1)=\Big\{T_{\beta\circ n}^k(1):k\geq 1 \Big\}
\]
with $T_{\beta\circ n}(1)=\beta-\lfloor \beta\rfloor$. We start by explicitly describing the set $O_{T_{\beta\circ n}}(1)$. For this we must consider several cases with parts.\\
\\
\underline{Case 1:} If $n=p$, then $\lfloor \frac{p}{n}\rfloor =1=\lfloor\beta \rfloor$ and $n^2\beta^2=n^2\beta+q$. Thus $n^2\beta(\beta-1)=q$ implying $\beta-1=\frac{q}{n^2\beta}=\frac{q/n}{n\beta}$.\\ (i) If $n|q$, then from $q\leq p=n$ we see that $n=q$. Then, $T_{\beta\circ n}(1)=\beta- \lfloor \beta \rfloor=\beta-1=\frac{1}{n\beta}$, which is an end point of the partition $\mathcal{I}_{\beta\circ n}$. Thus $T_{\beta\circ n}^k(1)=0$ for all $k\geq 2$ so that $O_{T_{\beta\circ n}}(1)=\Big\{0,\displaystyle\frac{1}{n\beta}\Big\}$.\\ (ii) If $n\not|q$, then from $q\leq p=n$, we get $q<n$ and $\frac{q}{n^2\beta}=\frac{q/n}{n\beta}<\frac{1}{n\beta}$. Thus, 
\[T^2_{\beta\circ n}(1)=n\beta (\frac{q}{n^2\beta})=\frac{q}{n},\]
which is an end-point. Hence,
$T_{\beta\circ n}^k(1)=0$ for all $k\geq 3$, and 
$O_{T_{\beta\circ n}}(1)=\Big\{0,\displaystyle\frac{q}{n^2\beta},\displaystyle\frac{q}{n}\Big\}$.\\
\\
\underline{Case 2:}
If $n<p$, then as above we write $p=ns+t$, $s\geq 1, 0\leq t\leq n-1$. So $\frac{p}{n}=s+\frac{t}{n}=\lfloor\frac{p}{n}\rfloor + \frac{t}{n}=\lfloor \beta \rfloor+ \frac{t}{n}$. Furthermore, from $n^2\beta^2=pn\beta+q$, we get $\beta=\frac{p}{n}+\frac{q}{n^2\beta}$.  Then, 
\[
T_{\beta\circ n}(1)=\beta-\left\lfloor\beta\right\rfloor=\beta -\left\lfloor\frac{p}{n}\right\rfloor=\left(\beta-\frac{p}{n}\right)+\left(\frac{p}{n}-\left\lfloor\frac{p}{n}\right\rfloor\right)=\frac{q}{n^2\beta}+\frac{t}{n}\in \begin{cases}
    \Big[\frac{t}{n}+\frac{\lfloor q/n\rfloor}{n\beta},\frac{t}{n}+\frac{\lfloor q/n\rfloor+1}{n\beta}\Big) & \text{ if } q<p\\
    \\
    \Big[\frac{t}{n}+\frac{ \lfloor \beta \rfloor}{n\beta},\frac{t+1}{n}\Big) & \text{ if } q=p
\end{cases}
\]
In both cases, $T_{\beta \circ n}^2(1)=\beta n \left(\frac{t}{n}+\frac{ q/n }{n\beta}\right)-\left(t\beta +\lfloor q/n \rfloor\right)= \frac{q}{n}-\left\lfloor \frac{q}{n} \right\rfloor $. We consider three cases. \\
(i) If $q=n<p$, then $\frac{q}{n}-\lfloor\frac{q}{n}\rfloor =0$ so that  $T_{\beta\circ n}^k(1)=0$ for all $k\geq 2$ and 
$O_{T_{\beta\circ n}}(1)=\Big\{0,\frac{t}{n}+\frac{1}{n\beta}\Big\}$.\\
(ii) If $q<n<p$, then $\frac{q}{n}-\lfloor\frac{q}{n}\rfloor =\frac{q}{n}$. Thus  $T_{\beta\circ n}^k(1)=0$ for all $k\geq 3$ and 
$O_{T_{\beta\circ n}}(1)=\Big\{0,\frac{q}{n},\frac{t}{n}+\frac{q}{n^2\beta}\Big\}$.\\
(iii) If $n<q\leq p$, then we write $q=nr+b$ with $r\geq 1$ and $0\leq b\leq n-1$. Then,  $\frac{q}{n}-\lfloor\frac{q}{n}\rfloor =\frac{b}{n}$ which is an end-point. Therefore  $T_{\beta\circ n}^k(1)=0$ for all $k\geq 3$ and 
$O_{T_{\beta\circ n}}(1)=\Big\{0,\frac{b}{n},\frac{t}{n}+\frac{q}{n^2\beta}\Big\}$. Summarizing, we have
\[O_{T_{\beta\circ n}}(1)=
\begin{cases}
\Big\{0,\displaystyle\frac{1}{n\beta}\Big\}
& \text{ if } q=n=p\\
\\
\Big\{0,\displaystyle\frac{q}{n^2\beta},\displaystyle\frac{q}{n}\Big\} & \text{ if } q<n=p\\
\\
\Big\{0,\frac{t}{n}+\frac{1}{n\beta}\Big\} & \text{ if } q=n<p,\, p=ns+t\\
\\
\Big\{0,\frac{q}{n},\frac{t}{n}+\frac{q}{n^2\beta}\Big\} & \text{ if } q<n<p,\, p=ns+t\\
\\
\Big\{0,\frac{b}{n},\frac{t}{n}+\frac{q}{n^2\beta}\Big\} & \text{ if } n<q\leq p,\, p=ns+t,\,q=nr+b.
\end{cases}
\]
\\In all cases $O_{T_{\beta\circ n}}(1)$ is a finite set consisting of at most three elements. \\

We now calculate $T_{(\beta +\frac{1}{n})\circ n}(1)=\beta +\frac{1}{n}-\lfloor \beta +\frac{1}{n} \rfloor$ in all cases considered. We first show that
\[\lfloor \beta +\frac{1}{n} \rfloor\in \Big\{\lfloor\beta \rfloor, \lfloor \beta \rfloor +1 \Big\}=\Big\{\lfloor\frac{p}{n} \rfloor, \lfloor \frac{p}{n} \rfloor +1 \Big\}.\] 
If $p=n$, then $1<\beta <1+\frac{1}{n}$ and $1<1+\frac{1}{n}<1+\frac{2}{n}\leq 2$ (equality holds if $n=2$). Thus, $\lfloor \beta +\frac{1}{n}\rfloor=1=\lfloor \beta \rfloor$.
If $n<p$, then $\frac{p}{n}+\frac{1}{n}<\beta+\frac{1}{n} <\frac{p}{n}+\frac{2}{n}$. From above, we have $\frac{p}{n}+\frac{1}{n}\leq \lfloor \frac{p}{n} \rfloor +1$ and $\frac{p}{n}+\frac{2}{n}<\lfloor \frac{p}{n} \rfloor+2$. Thus $\beta +\frac{1}{n}\in \Big(\lfloor\frac{p}{n}\rfloor,\lfloor\frac{p}{n}\rfloor+2\Big)$ implying that $\lfloor\beta +\frac{1}{n}\rfloor \in \Big\{\lfloor\frac{p}{n}\rfloor,\lfloor\frac{p}{n}\rfloor+1\Big\}$. \\
\\
We start by assuming $\lfloor\beta +\frac{1}{n}\rfloor=\lfloor\frac{p}{n}\rfloor=\lfloor\beta\rfloor$, and calculate $T_{(\beta+\frac{1}{n})\circ n}(1)$. Note that 
\[T_{(\beta+\frac{1}{n})\circ n}(1)=\beta+\frac{1}{n}-\lfloor\beta+\frac{1}{n} \rfloor=\beta+\frac{1}{n}-\lfloor\beta \rfloor=T_{\beta\circ n}(1)+\frac{1}{n}.\]
Then
 \[
 T_{(\beta+\frac{1}{n})\circ n}(1)=\begin{cases}
     \frac{1}{n}+\frac{1}{n\beta} & \text{ if } q=n=p\\
     \\
     \frac{1}{n}+\frac{q}{n^2\beta} & \text{ if } q<n=p\\
     \\
     \frac{t+1}{n}+\frac{q}{n^2\beta} & \text{ if } n< q\le p,\, p=ns+t\\
 \end{cases}
 \]
which in all cases is not an element of $O_{T_{\beta\circ n}}(1)$. Thus, $f_{\beta\circ n}$ and $f_{(\beta +\frac{1}{n})\circ n}$ have different discontinuity points and hence $\mu_{(\beta,n)}\neq  \mu_{(\beta^\prime,n)}$, a contradiction.\\ 
\\
We now consider the case $\lfloor\beta+\frac{1}{n} \rfloor=\lfloor\beta \rfloor+1 $. We have seen that if $p=n$, then $\lfloor\beta+\frac{1}{n} \rfloor =\lfloor\beta \rfloor=1$ so we must have $n<p$. We write, as before, $p=ns+t$, $s\geq 1, 0\leq t\leq n-1$ which gives $\frac{p}{n}-\lfloor\frac{p}{n} \rfloor=\frac{t}{n} $. Since $\beta<\frac{p}{n}+\frac{1}{n}$, the only way $\beta +\frac{1}{n}\geq \lfloor \beta \rfloor +1=\lfloor \frac{p}{n} \rfloor +1$ is that $t=n-1$, see also Figure \ref{fig:estimate}.\\
\begin{figure}[thb]
\center{\begin{tikzpicture}[xscale=1]
\draw[ >=triangle 45, <->] (0,0.5) -- (2,0.5)node[above,midway]{$\frac{n-1}{n}$};

\draw(0,0)--(4,0);
\draw(0,-0.1)node[below]{$\lfloor\frac{p}{n}\rfloor$}--(0,0.1);

\draw(3,-0.1)node[below]{$\lfloor\frac{p}{n}\rfloor+1$}--(3,0.1);

\draw(2,-0.1)node[below]{$\frac{p}{n}$}--(2,0.1);
\draw(3.6,0)circle(1.75pt)[color=red!50,fill=red!50, fill opacity=1];
\draw(3.6,0)node[above]{$\beta+\frac{1}{n}$};
\draw(2.7,0)circle(1.75pt)[color=red!50,fill=red!50, fill opacity=1];
\draw(2.7,0)node[above]{$\beta$};

\end{tikzpicture}}
\caption{}
\label{fig:estimate}
\end{figure}

Since $n<p$ and $t=n-1$, we have 
\begin{align*}
T_{(\beta+\frac{1}{n})\circ n}(1)&= \beta+\frac{1}{n}-\lfloor\beta+\frac{1}{n}\rfloor\\
                                 &= \beta-\lfloor\beta\rfloor+\frac{1}{n}-1\\
                                 &= \frac{q}{n^2\beta}+\frac{n-1}{n}+\frac{1}{n}-1\\
                                 &= \frac{q}{n^2\beta} \notin O_{T_{\beta\circ n}}(1).
\end{align*}
Leading to a contradiction. Thus  $\mu_{(\beta,n)}\neq  \mu_{(\beta^\prime,n)}$ for any $\beta^\prime>1$ with $\beta\not= \beta^{\prime}$.
\end{proof}

\begin{theorem}\label{th:betairr}
    Let $\beta>1$ be irrational  and $n,m\geq 2$. Then, for any irrational $\beta^\prime>1$ with $\beta^{\prime}\not= \beta$, we have $\mu_{(\beta,n)}\neq  \mu_{(\beta^\prime,m)}$.
\end{theorem}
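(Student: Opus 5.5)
Suppose for contradiction that $\mu_{(\beta,n)}=\mu_{(\beta^\prime,m)}$ with $\beta\neq\beta^\prime$ both irrational. As in the proof of Theorem~\ref{th:nequalm}, this splits into
\[
\mu_{n\circ\beta}=\mu_{m\circ\beta^\prime}\quad\text{and}\quad \mu_{\beta\circ n}=\mu_{\beta^\prime\circ m}.
\]
Since $T_{n\circ\beta}=T_{n\beta}$ and $T_{m\circ\beta^\prime}=T_{m\beta^\prime}$, the first equality is an equality of R\'enyi--Parry measures. If $n\beta=m\beta^\prime$, then both are non-integers because $\beta,\beta^\prime$ are irrational. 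Otherwise Theorem~\ref{th:HuangWang} applies. Up to swapping the roles, it gives that $n\beta$ satisfies $x^2=ax+b$ with $a\geq b\geq 1$ and $m\beta^\prime=n\beta+1$.

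\textbf{Case A: $n\beta=m\beta^\prime$.} If $n=m$ we get $\beta=\beta^\prime$, which is excluded. So $n\neq m$, and $\beta^\prime=\frac{n}{m}\beta$. I would then compare the discontinuity sets of $f_{\beta\circ n}$ and $f_{\beta^\prime\circ m}$. In general no quadratic relation is available here, so the orbit of $1$ is not known explicitly, and this is the main obstacle of the argument.

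My first attempt is to compare the largest discontinuity and the values of the densities near $0$ and near $1$. The density $f_{\beta\circ n}$ is constant equal to its minimum on $[\sup O_{T_{\beta\circ n}}(1),1)$, and $T_{\beta\circ n}(1)=\beta-\lfloor\beta\rfloor$ is a discontinuity. So equality of the densities forces
\[
\beta-\lfloor\beta\rfloor\in O_{T_{\beta^\prime\circ m}}(1)\cup\{T^k_{\beta^\prime\circ m}(T_{\beta^\prime}(1))\}
\]
(cf.\ Remark~\ref{rem:otherdisc}), and symmetrically. I would also use the Perron--Frobenius equation $\mathcal{L}f=f$ at points near $1$. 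Both transfer operators act on the same $f$ with slopes $n\beta=m\beta^\prime$, but their branch structures differ: the full-branch counts are $n\lfloor\beta\rfloor$ versus $m\lfloor\beta^\prime\rfloor$. Because the slopes agree, counting preimages of a point $x$ just below $1$ gives
\[
n\lfloor\beta\rfloor\cdot f\text{-averages}=m\lfloor\beta^\prime\rfloor\cdot f\text{-averages}.
\]
Combining this with the total-mass relation should yield $n\lfloor\beta\rfloor=m\lfloor\beta^\prime\rfloor$, and similarly for the non-full branches, so $n=m$, a contradiction. If this counting argument fails, the fallback is to show that the invariant densities determine the multiset of branch images, which pins down $n$.

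\textbf{Case B: $m\beta^\prime=n\beta+1$ with $n\beta$ quadratic.} This is the situation of Theorem~\ref{th:nequalm}, except that $m$ may differ from $n$. I would repeat that proof. First, $O_{T_{\beta\circ n}}(1)$ is computed exactly, as in Theorem~\ref{th:nequalm}: it is a set of at most three points, and it is unchanged here since it only involves $n$ and $\beta$. Next, $T_{\beta^\prime\circ m}(1)=\beta^\prime-\lfloor\beta^\prime\rfloor$ with $\beta^\prime=\frac{n\beta+1}{m}$. This point involves $\beta$ through the coefficient $\frac{n}{m}$, whereas the elements of $O_{T_{\beta\circ n}}(1)$ are rational or of the form $\frac{c}{n}+\frac{q}{n^2\beta}$. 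Now $\beta$ is a quadratic irrational and $\frac{1}{\beta}=\frac{n\beta-a}{b}\cdot\frac{1}{n}\cdot n$ lies in $\mathbb{Q}(\beta)$. Writing each element of $O_{T_{\beta\circ n}}(1)$ in the basis $\{1,\beta\}$, equality would force matching $\beta$-coefficients, and this should give $m=n$. That reduces Case B to Theorem~\ref{th:nequalm}.

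The main difficulty is Case A. Case B is a routine $\mathbb{Q}(\beta)$ coefficient comparison that funnels into Theorem~\ref{th:nequalm}.
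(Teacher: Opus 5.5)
Your Case B is correct and cleaner than the paper's route. The paper computes $\lfloor\beta'\rfloor$ in the subcases $m=p+1$ and $m<p+1$, obtains $T_{\beta'\circ m}(1)$ explicitly, and then asserts it is not in $O_{T_{\beta\circ n}}(1)$ ``from the fact that $n\neq m$''. Your comparison of coordinates in the $\mathbb{Q}$-basis $\{1,\beta\}$ is what makes that assertion rigorous, but you should write it out instead of ``should give''. By the computation in Theorem~\ref{th:nequalm}, the only irrational point of $O_{T_{\beta\circ n}}(1)$ is $T_{\beta\circ n}(1)=\beta-\lfloor\beta\rfloor$, whose $\beta$-coordinate is $1$. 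On the other hand, $\beta'-\lfloor\beta'\rfloor=\frac nm\beta+\frac1m-\lfloor\beta'\rfloor$ is irrational with $\beta$-coordinate $\frac nm$, so $n=m$. (Your expression for $1/\beta$ actually equals $1/(n\beta)$, but you do not need it.)

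The genuine gap is Case A. You are right that $n\beta=m\beta'$, which can occur when $n\neq m$, must be treated separately. The paper's proof applies Theorem~\ref{th:HuangWang} directly and never addresses this case. But what you write does not handle it either. Put $\gamma=n\beta=m\beta'$. At a point $x$ near $1$ the two Perron--Frobenius equations read $f(x)=\frac1\gamma\sum f(y)$. For one map the sum runs over $y=\frac kn+\frac{l+x}{n\beta}$, for the other over $y=\frac km+\frac{l+x}{m\beta'}$. These are different point sets at which the non-constant $f$ takes different values. So nothing lets you cancel the ``$f$-averages'' and conclude $n\lfloor\beta\rfloor=m\lfloor\beta'\rfloor$. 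Your fallback merely restates the claim, and the orbit-membership condition cannot be used without knowing the orbit of $1$.

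One way to close Case A is as follows; take $n<m$.
Since $T_\beta\circ T_{n\circ\beta}=T_{\beta\circ n}\circ T_\beta$, uniqueness of the acim gives $f_{\beta\circ n}=\mathcal{L}_\beta h_\gamma$. Here $\mathcal{L}_\beta$ is the transfer operator of $T_\beta$, and $h_\gamma=C^{-1}\sum_{k\ge0}\gamma^{-k}\mathds{1}_{[0,a_k)}$ with $a_k=T_\gamma^k(1)$ is the Parry density.
For $0\le a\le 1$ one has $\mathcal{L}_\beta\mathds{1}_{[0,a)}=\beta^{-1}\big(\lfloor\beta a\rfloor+\mathds{1}_{[0,\beta a-\lfloor\beta a\rfloor)}\big)$. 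Hence $f_{\beta\circ n}$ is non-increasing and
\[
f_{\beta\circ n}(0^+)-f_{\beta\circ n}(1^-)=\frac{n}{C\gamma}S_n,\qquad S_n=\sum_{k\ge0,\ \beta a_k\notin\mathbb{Z}}\gamma^{-k}\in\Big[1,\frac{\gamma}{\gamma-1}\Big].
\]
Here $S_n\ge1$ because the term $k=0$ is present, as $\beta\notin\mathbb{Z}$.
The same holds for $f_{\beta'\circ m}$, with the same $C$ and $\gamma$.
Equal densities then force $nS_n=mS_m$, hence $m\le n\gamma/(\gamma-1)$, i.e.\ $\gamma\le m/(m-n)\le m$. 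This contradicts $\gamma=m\beta'>m$. The argument uses only $\beta,\beta'\notin\mathbb{Z}$.
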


\begin{proof}
If $n=m$, then Theorem \ref{th:nequalm} gives the result. Assume $n\neq m$ and $\mu_{(\beta,n)}=\mu_{(\beta^\prime,m)}$ for some irrational $\beta^\prime>1$. Then, $ \mu_{\beta\circ n}=\mu_{\beta^\prime\circ m}$ and $\mu_{n\circ \beta}=\mu_{m\circ \beta^\prime}$.
Since $T_{n\circ \beta}(x)=n\beta x \mod 1$ and $T_{m\circ \beta^\prime}(x)=m\beta^\prime x \mod 1$, by Theorem \ref{th:HuangWang} of Huang and Wang of \cite{HW25}, $n\beta$ must satisfy a quadratic equation of the form $x^2=px+q$ with $p\geq q\geq 1$ and $m\beta^\prime=n\beta +1=n(\beta+\frac{1}{n})$, or with reverse roles, but the proof is similar. Now $\beta^\prime=\frac{n}{m}(\beta+\frac{1}{n})$. Since $\beta^\prime>1$, we get $\frac{n}{m}(\beta+\frac{1}{n})>1$ or that $n\beta+1>m$. From $n^2\beta^2=pn\beta+q$, we get $n\beta=p+\frac{q}{n^2\beta}$. Thus, $m\beta^\prime=n\beta+1=p+1+\frac{q}{n^2\beta}$. Since $p<n\beta<p+1$, we get $\frac{p}{n}<\beta<\frac{p}{n}+\frac{1}{n}$ and $p+1<n\beta+1<p+2$. Using the fact $\beta>1$, the first inequality gives $n\leq p$. Using $n\beta+1>m$, we see that $m\leq p+1$. From the equality $f_{\beta\circ n}=f_{\beta^\prime\circ m}$, we conclude that  $O_{T_{\beta\circ n}}(1)=O_{T_{\beta^\prime\circ m}}(1)=O_{T_{\frac{n}{m}(\beta+\frac{1}{n})\circ m}}(1)$. 
The set $O_{T_{\beta\circ n}}(1) $ has been calculated in Theorem \ref{th:nequalm}. To reach our contradiction, it is enough to show 
\[
T_{\frac{n}{m}(\beta+\frac{1}{n})\circ m}(1)=\frac{n}{m}(\beta+\frac{1}{n})-\lfloor\frac{n}{m}(\beta+\frac{1}{n}) \rfloor=\frac{n}{m}\beta+\frac{1}{m}-\lfloor\frac{n}{m}\beta+\frac{1}{m} \rfloor\notin O_{T_{\beta\circ n}}(1).
\]
From $p<n\beta<p+1$, we get $\frac{p+1}{m}<\frac{n}{m}\beta +\frac{1}{m}<\frac{p+2}{m}$ with $m\leq p+1$. We consider two cases: \\
(i) If $m=p+1$, then the above gives $1<\frac{n}{m}\beta + \frac{1}{m}<1+\frac{1}{m}$. Thus, $\lfloor\frac{n}{m}\beta +\frac{1}{m}\rfloor=1$, and 
\begin{align*}
    T_{\frac{n}{m}(\beta+\frac{1}{n})\circ m}(1)&=T_{(\frac{n}{p+1}\beta+\frac{1}{p+1})\circ p+1}(1)\\
                                                &=\frac{n}{p+1}\beta +\frac{1}{p+1}- \lfloor\frac{n}{p+1}\beta +\frac{1}{p+1} \rfloor\\
                                                &=\frac{p}{p+1}+\frac{q}{n(p+1)\beta}+\frac{1}{p+1}-1\\
                                                &=\frac{q}{n(p+1)\beta}\notin O_{T_{\beta\circ n}}(1),
\end{align*}
where we used $n\beta=p+\frac{q}{n\beta}$.\\
(ii) If $m<p+1$, then we write $p+1=mu+v$, $u\geq1, 0\leq v\leq m-1$. Then, $u=\lfloor \frac{p+1}{m}\rfloor$, and since $\frac{p+1}{m}<\frac{n}{m}\beta +\frac{1}{m}<\frac{p+2}{m}$, we get $u+\frac{v}{m}<\frac{n}{m}\beta +\frac{1}{m}<u+\frac{v+1}{m}$. So $\lfloor \frac{n}{m}\beta +\frac{1}{m}  \rfloor=u= \lfloor \frac{p+1}{m} \rfloor$. Thus,
\begin{align*}
    T_{\frac{n}{m}(\beta+\frac{1}{n})\circ m}(1)&= \frac{n}{m}\beta +\frac{1}{m}-\lfloor \frac{n}{m}\beta +\frac{1}{m} \rfloor\\
                                                &= \frac{n}{m}\beta +\frac{1}{m}-\lfloor \frac{p+1}{m} \rfloor\\
                                                &= \frac{p}{m}+\frac{q}{nm\beta}+\frac{1}{m}-\lfloor \frac{p+1}{m} \rfloor\\
                                                &=\frac{p+1}{m}-\lfloor \frac{p+1}{m} \rfloor+\frac{q}{nm\beta}\\
                                                &=\frac{v}{m}+\frac{q}{nm\beta}\notin O_{T_{\beta\circ n}}(1) 
\end{align*}
where we used $n\beta=p+\frac{q}{n\beta}$ and the last equation follows from the fact that $n\neq m$. Thus, $f_{\beta\circ n}\neq f_{\beta^\prime\circ m}$ leading to a contradiction. Therefore, $\mu_{(\beta,n)}\neq \mu_{(\beta^\prime,m)}$ for any $\beta^\prime >1$.

\end{proof}
\begin{remark}\label{re:difint}
The proof of the above theorem also shows that if $m\not= n$, then for any irrational $\beta>1$,
 $\mu_{(\beta,n)}\not= \mu_{(\beta,m)}$. For if these measures are equal, then by the result of Huang and Wang we must have $m\beta=n\beta+1$ leading to $\beta$ being rational, which is a contradiction.
   
\end{remark}

As a direct corollary of Theorems \ref{th:betarat}, \ref{th:betairr} and Remark 
\ref{re:difint}, we have the following results.
\begin{theorem}
Let $\beta,\beta^\prime>1$ be non-integer real numbers and $n,m\geq 2$, integers. Then, 
    $\mu_{(\beta,n)}=\mu_{(\beta^\prime,m)}$ if and only if $\beta=\beta^\prime =\frac{p}{q}$ with $p$ and $q$ relatively prime and $n,m\in\{kq: k\geq 1\}$.
\end{theorem}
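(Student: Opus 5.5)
The plan is to assemble the final theorem from Theorems \ref{th:betarat}, \ref{th:betairr} and Remark \ref{re:difint} by splitting according to whether $\beta$ and $\beta^\prime$ are rational. Since both are non-integers, a rational one can always be written as $\frac{p}{q}$ with $p>q>1$ coprime. This is exactly the hypothesis needed in Theorem \ref{th:betarat}.

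For the ``if'' direction, suppose $\beta=\beta^\prime=\frac{p}{q}$ and $n=kq$, $m=lq$. Then Proposition \ref{prop:equal measure} gives $\mu_{(\frac{p}{q},kq)}=\mu_{(\frac{p}{q},lq)}$ directly.

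For the ``only if'' direction, assume $\mu_{(\beta,n)}=\mu_{(\beta^\prime,m)}$ and split into three cases.

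\emph{Case 1: $\beta$ is rational.} Theorem \ref{th:betarat} applies with arbitrary $\beta^\prime>1$ and $m\geq 2$. It yields $\beta^\prime=\beta=\frac{p}{q}$ and $n,m\in\{kq:k\geq 1\}$, as required.

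\emph{Case 2: $\beta$ is irrational and $\beta^\prime$ is rational.} Coincidence of measures is a symmetric relation. So I apply Theorem \ref{th:betarat} with the roles of $(\beta,n)$ and $(\beta^\prime,m)$ swapped. It forces $\beta=\beta^\prime\in\mathbb{Q}$, contradicting the irrationality of $\beta$. Hence this case cannot occur.

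\emph{Case 3: both $\beta$ and $\beta^\prime$ are irrational.}
\begin{itemize}
\item If $\beta^\prime\neq\beta$, Theorem \ref{th:betairr} (which already covers $n=m$ via Theorem \ref{th:nequalm}) says the measures differ, a contradiction.
\item If $\beta^\prime=\beta$ and $n\neq m$, Remark \ref{re:difint} says the measures differ, again a contradiction.
\item If $\beta^\prime=\beta$ and $n=m$, the two pairs are identical, so the equality is trivial.
\end{itemize}

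The one real problem is the last sub-case. The statement as written says equality forces $\beta$ to be rational, but $\mu_{(\beta,n)}=\mu_{(\beta,n)}$ holds trivially for irrational $\beta$. So the characterization must be read for distinct pairs $(\beta,n)\neq(\beta^\prime,m)$, and I would add that qualification explicitly. With that reading, the case analysis is exhaustive. Apart from this, the argument is only bookkeeping: checking that Case 2 is handled correctly by symmetry is the one point needing care, since Theorem \ref{th:betarat} only assumes rationality of the first base.
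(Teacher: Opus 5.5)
This is the paper's own proof: the theorem is stated there as a direct corollary of Theorem~\ref{th:betarat}, Theorem~\ref{th:betairr} and Remark~\ref{re:difint}, with Proposition~\ref{prop:equal measure} giving the converse, and you rightly also use Theorem~\ref{th:betarat} with the two pairs swapped. Your identical-pairs caveat is correct but reaches beyond the last sub-case: Case~1 already fails as written for $\beta=\beta^\prime=\frac32$, $n=m=3$ (the measures are trivially equal, yet $3\notin\{2k:k\geq 1\}$), because the Huang--Wang step in Theorems~\ref{th:betarat} and~\ref{th:betairr} tacitly assumes $n\beta\neq m\beta^\prime$, so the distinct-pairs reading must be imposed on those cited results too (and, strictly, distinct pairs with $n\beta=m\beta^\prime\notin\mathbb{N}$, such as $(\frac72,3)$ and $(\frac32,7)$, are never actually treated in the paper).
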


\begin{theorem}
Let $m\ge 2$ and $n\ge 2$ be relatively prime integers. Then, for any two non-integer real numbers $\beta,\,\beta^{\prime}>1$ one has $\mu_{(\beta,n)}\not= \mu_{(\beta^{\prime}, m)}$.    
\end{theorem}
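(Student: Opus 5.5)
The statement to prove is the last theorem: if $n,m\geq 2$ are relatively prime, then $\mu_{(\beta,n)}\neq\mu_{(\beta^\prime,m)}$ for all non-integer $\beta,\beta^\prime>1$. I would derive it from the full characterization just stated (the corollary of Theorems \ref{th:betarat}, \ref{th:betairr} and Remark \ref{re:difint}). Suppose, for contradiction, that $\mu_{(\beta,n)}=\mu_{(\beta^\prime,m)}$ for some non-integer $\beta,\beta^\prime>1$. By that characterization, $\beta=\beta^\prime=\frac{p}{q}$ with $p,q$ relatively prime and $n,m\in\{kq:k\geq 1\}$.

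The key step is to observe that $q\geq 2$. Indeed, if $q=1$ then $\beta=p$ would be an integer, which the hypothesis excludes. Writing $n=kq$ and $m=lq$, we see that $q$ divides $\gcd(n,m)$. Since $\gcd(n,m)=1$, this forces $q=1$, a contradiction. Hence $\mu_{(\beta,n)}\neq\mu_{(\beta^\prime,m)}$.

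The only point needing care is that the characterization genuinely covers every case: $\beta$ rational (Theorem \ref{th:betarat}), both $\beta,\beta^\prime$ irrational (Theorem \ref{th:betairr}), and $\beta$ irrational with $\beta^\prime$ rational. The last case reduces to Theorem \ref{th:betarat} with the roles of the two pairs swapped, since equality of measures is symmetric. I expect no real obstacle beyond confirming this case coverage, because all the substantive work is done in the earlier theorems.
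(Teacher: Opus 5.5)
Your reduction is the paper's own: the paper obtains this statement as a direct corollary of Theorems~\ref{th:betarat}, \ref{th:betairr} and Remark~\ref{re:difint}, and your arithmetic ($q\ge 2$ because $\beta\notin\mathbb{Z}$, so $q\mid\gcd(n,m)=1$ is impossible) is right. A small correction to your case check: $\beta=\beta'$ irrational falls under Remark~\ref{re:difint}, not Theorem~\ref{th:betairr}, which assumes $\beta'\neq\beta$.

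However, the case coverage you single out as needing care is where the argument has a hole, in your proposal and in the paper alike. Theorem~\ref{th:betairr}, and the step $\frac{np}{q}\in\mathbb{N}$ in the proof of Theorem~\ref{th:betarat}, invoke Theorem~\ref{th:HuangWang} for $T_{n\circ\beta}=T_{n\beta}$ and $T_{m\circ\beta'}=T_{m\beta'}$. That theorem is about two \emph{distinct} bases, so the case $m\beta'=n\beta$, where the first components agree trivially, is never treated. Coprime $n,m\ge 2$ are distinct, so this case genuinely arises here with $\beta'=\frac{n}{m}\beta$. Examples are $(\beta,n)=(\frac53,2)$ with $(\beta',m)=(\frac{10}{9},3)$, or $(1+\sqrt2,2)$ with $(\frac23(1+\sqrt2),3)$. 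Nothing in the paper shows $\mu_{\beta\circ n}\neq\mu_{\beta'\circ m}$ for such pairs, so citing the characterization does not yet prove the statement.

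The missing case can be closed as follows. Let $\gamma=n\beta=m\beta'$ and say $n<m$. Let $\nu$ be an absolutely continuous $T_{\beta\circ n}$-invariant measure. Since $T_n\circ T_{\beta\circ n}=T_{n\circ\beta}\circ T_n$, the measure $(T_n)_*\nu$ is absolutely continuous and $T_{n\beta}$-invariant, hence equals the Parry measure $\mu_\gamma$. Therefore $\nu=(T_{\beta\circ n})_*\nu=(T_\beta)_*\mu_\gamma$. Push Parry's density $\frac1C\sum_{j\ge0}\gamma^{-j}\mathds{1}_{[0,T_\gamma^j1)}$ forward by $T_\beta$, with $\beta=\gamma/n$; note the weights $\gamma^{-j}$, which are missing from the formula quoted in the introduction. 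This gives the density
\[
\frac{n}{\gamma C}\sum_{j\ge0}\gamma^{-j}\Big(\big\lfloor\tfrac{\gamma}{n}T_\gamma^j1\big\rfloor+\mathds{1}_{[0,\{\frac{\gamma}{n}T_\gamma^j1\})}\Big),\qquad \{y\}=y-\lfloor y\rfloor ,
\]
and $\mu_{\beta'\circ m}$ has the same expression with $m$ in place of $n$ and the same $C$. Both densities are nonincreasing. The drop of the first between $0^+$ and $1^-$ is $\frac{n}{\gamma C}S_n$, where $S_n$ is the sum of $\gamma^{-j}$ over those $j$ with $\frac{\gamma}{n}T_\gamma^j1\notin\mathbb{Z}$. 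Hence $1\le S_n\le\frac{\gamma}{\gamma-1}$, the term $j=0$ being present since $\beta\notin\mathbb{Z}$, and likewise $S_m\ge 1$. Since $\gamma=m\beta'>m$, we get $nS_n\le\frac{n\gamma}{\gamma-1}<\frac{nm}{m-1}\le m\le mS_m$, so the two densities differ. With this case added, your corollary argument is complete.
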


\section{Preliminary Investigations, when both bases are non-integer}

In this last section, we shed some light onto the case where both bases are rational but not integer. As we shall see, things can get complicated quickly.  Even in the case where orbit sets are equal, we can find different measures.

\begin{lemma}\label{lem:denomblowup}
    Let $\beta_1=\frac{p_1}{q_1}$ and $\beta_2=\frac{p_2}{q_2}$ if $p_1\neq k q_2$ for any $k\in\mathbb{N}$ then   $O_{T_{\beta_1\circ \beta_2}}(1)$ is an infinite set. Moreover, define $T_{\beta_1\circ \beta_2}^n(1)=\frac{t_n}{s_n}$ and let us write $\frac{p_1}{q_2}=\frac{p}{z}$ such that $\gcd (p,z)=1 $. \\
    Then we can write $s_n=m_nz^n$ with $m_n$ a positive sequence of integers. We also have $\gcd(t_n,z)=1$.
\end{lemma}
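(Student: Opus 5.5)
The plan is to track the exact arithmetic of the orbit of $1$ and show that the power of $z$ in the reduced denominator increases by exactly one at each step. That gives $s_n=m_nz^n$ together with $\gcd(t_n,z)=1$. It also gives infiniteness, because the numbers $T^n_{\beta_1\circ\beta_2}(1)$ have pairwise distinct $z$-adic valuations once we know $z>1$.

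First, the hypothesis gives $z>1$. Write $\frac{p_1}{q_2}=\frac{p}{z}$ in lowest terms, so $z=q_2/\gcd(p_1,q_2)$. If $z=1$ then $q_2\mid p_1$, i.e.\ $p_1=kq_2$, which is excluded. Next, I record the shape of one step of $T:=T_{\beta_1}\circ T_{\beta_2}$. For any $x\in[0,1]$, on the branch containing $x$ (or its left limit, for $x=1$ and for other endpoints, consistent with the convention $T_n(1)=1$ used in Section 2) we have
\[
T(x)=\beta_1\bigl(\beta_2x-a\bigr)-b=\beta_1\beta_2x-\beta_1a-b
\]
for some integers $a,b\ge 0$. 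The correction term $\beta_1a+b=\frac{p_1a+bq_1}{q_1}=\frac{M}{q_1}$ has denominator dividing $q_1$ only. Moreover,
\[
\beta_1\beta_2=\frac{p_1}{q_2}\cdot\frac{p_2}{q_1}=\frac{p\,p_2}{z\,q_1}.
\]
Here $\gcd(p_2,z)=1$, since $z\mid q_2$ and $\gcd(p_2,q_2)=1$; also $\gcd(p,z)=1$ by construction.

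The main step is an induction on $n$, starting from $t_0/s_0=1/1$. Suppose $T^n(1)=\frac{t_n}{s_n}$ in lowest terms with $s_n=m_nz^n$ and $\gcd(t_n,z)=1$. Then
\[
T^{n+1}(1)=\frac{p\,p_2\,t_n}{z^{n+1}q_1m_n}-\frac{M}{q_1}=\frac{p\,p_2\,t_n-M\,m_nz^{n+1}}{z^{n+1}q_1m_n}.
\]
The numerator is congruent to $p\,p_2\,t_n$ modulo every prime dividing $z$, so it is coprime to $z$. Hence reducing the fraction removes no prime factor of $z$ from the denominator, even if such primes also divide $q_1m_n$. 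Consequently the reduced denominator $s_{n+1}$ satisfies $v_\ell(s_{n+1})\ge v_\ell(z^{n+1})$ for every prime $\ell\mid z$, and the reduced numerator $t_{n+1}$ is coprime to $z$. This yields $s_{n+1}=m_{n+1}z^{n+1}$ with $m_{n+1}$ a positive integer and $\gcd(t_{n+1},z)=1$. In particular $t_{n+1}\neq0$, so the orbit never reaches $0$ and the next step makes sense.

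Finally, for infiniteness, fix a prime $\ell\mid z$. The $\ell$-adic valuation of $T^n(1)$ is $v_\ell(t_n)-v_\ell(s_n)$. Since $\gcd(t_n,z)=1$ we have $v_\ell(t_n)=0$, and the induction gives $v_\ell(s_n)\ge n\,v_\ell(z)$. This alone only gives a lower bound, so I would sharpen the induction to track $v_\ell$ exactly: $v_\ell(s_{n+1})=v_\ell(s_n)+v_\ell(z)+v_\ell(q_1)-v_\ell(\gcd(\cdot))$. It is cleaner simply to note that $v_\ell(s_n)\to\infty$. That forces infinitely many distinct denominators, hence $O_{T_{\beta_1\circ\beta_2}}(1)$ is infinite.

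The step I expect to need the most care is the bookkeeping at discontinuities, especially the first step from $x=1$ and any later orbit point landing on a partition endpoint. I will handle this by noting that only the affine form $\beta_1\beta_2x-(\text{integer combination of }\beta_1\text{ and }1)$ is used, and this form holds on every branch with either endpoint convention.
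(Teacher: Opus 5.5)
Your proposal is correct and follows essentially the same route as the paper. Both arguments run an induction on the affine form $T(x)=\beta_1\beta_2x-(a\beta_1+b)$ with $\beta_1\beta_2=\frac{pp_2}{zq_1}$, and use $\gcd(p,z)=\gcd(p_2,z)=\gcd(t_n,z)=1$ to show that the new numerator stays coprime to $z$ while the denominator picks up another factor of $z$. The differences are cosmetic: you reduce fractions at each step, whereas the paper keeps the unreduced denominator, and you spell out the infiniteness via $v_\ell(s_n)\ge n\,v_\ell(z)\to\infty$, which already suffices without the ``sharpened'' valuation formula you sketch.
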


\begin{proof}
    We prove the statement of $\frac{t_n}{s_n}$ by induction. The fact that $O_{T_{\beta_1\circ \beta_2}}(1)$ is an infinite set is then immediate. Let us write $p_1=rp$ and $q_2=rz$ for some $r\in\mathbb{N}$. First note that in general, for some $k,l\in\mathbb{N}$, we have
\begin{align*}
    T_{\beta_1\circ \beta_2}(x)&=\beta_1\beta_2x-(k\beta_1+l)\\
    &=\frac{p_1p_2}{q_1q_2}x-\frac{kp_1+lq_1}{q_1}\\
    &=\frac{p_1p_2x-q_2(kp_1+lq_1)}{q_1q_2}\\
    &=\frac{rpp_2x-rz(kp_1+q_1)}{q_1rz}=\frac{pp_2x-z(kp_1+q_1)}{q_1z}
\end{align*}

From this we get $T_{\beta_1\circ \beta_2}(1)=\frac{pp_2-z(kp_1+q_1)}{q_1z}=\frac{t_1}{s_1}$. We have $\gcd(p,z)=1$ and since $\gcd(p_2,q_2)=1$ we also have $\gcd(p_2,z)=1$. Furthermore, $z(kp_1+q_1)$ is a multiple of $z$. We find that $\gcd(t_1,z)=1$ and $s_1=m_1z$ for some $m_1\in\mathbb{N}$. Now, suppose it holds for $n$. For some $k,l\in\mathbb{N}$ we have
\[
T_{\beta_1\circ \beta_2}\left(\frac{t_n}{s_n}\right)=\frac{pp_2t_n-zs_n(kp_1+q_1)}{m_nz^nq_1z}=\frac{t_{n+1}}{s_{n+1}}.
\]
Similarly as before $\gcd(p,z)=1$ and $\gcd(p_2,z)=1$, but now also $\gcd(t_n,z)=1$, and $zs_n(kp_1+q_1)$ is a multitude of $z$. We find that $\gcd(t_{n+1},z)=1$ and $s_{n+1}=m_{n+1}z^{n+1}$ for some $m_{n+1}\in\mathbb{N}$.

\end{proof}
Note that  in Lemma~\ref{lem:denomblowup} it is possible that $\frac{t_n}{s_n}$, when writing  $s_n=m_nz^n$  for some positive sequence $m_n$, is still not in the most reduced form. Though, the lemma as stated suffices for our purposes. Also note that any rational number $\frac{t_1}{s_1}$, when writing $\frac{t_1}{q_2}=\frac{p}{z}$ such that $\gcd(p,z)=1$ which has $z>1$, the same argument works and the orbit of it will be infinite. In particular, for the orbit of the other discontinuity which is relevant for the invariant measure (see Remark~\ref{rem:otherdisc} ). Let us write $p_1=dq_1+j$, then the other discontinuity point is $\frac{j}{q_1}$ which will have an infinite orbit when $j\neq kq_2$ for any $k\in \mathbb{N}$. This does not have to be the case. For example, for $\beta_1=\frac{7}{4}$ and $\beta_2=\frac{5}{3}$, the orbit of $1$ is infinite while  $T_{\beta_1}(1)=\frac{3}{4}$ and $T_{\beta_1\circ\beta_2}(\frac{3}{4})=0$. Instead of trying to find out when it is difficult to find equal measures let us now look what happens if we are in a relatively good setting. To that extent, let us investigate the case that $\beta_1=\frac{p_1}{q_1}$ and $\beta_2=\frac{p_2}{q_2}$ where $p_1=k_1q_2$ and $p_2=k_2q_1$ for some $k_1,k_2\in \mathbb{N}$. Here we have the following lemma. 

\begin{lemma}
    Let $\beta_1=\frac{p_1}{q_1}$ and $\beta_2=\frac{p_2}{q_2}$ where $p_1=k_1q_2$ and $p_2=k_2q_1$ for some $k_1,k_2\in \mathbb{N}$. Furthermore, write $p_1=d_1q_1+j_1$ and $p_2=d_2q_2+j_2$.  
    Then all of the following sets are finite: $ O_{T_{\beta_1\circ \beta_2}}(1)$, and $ O_{T_{\beta_2\circ \beta_1}}(1)$, and  $O_{T_{\beta_1\circ \beta_2}}(\frac{j_1}{q_1})$, and $ O_{T_{\beta_2\circ \beta_1}}(\frac{j_2}{q_2})$.
\end{lemma}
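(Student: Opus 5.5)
The plan is to find, for each composition, a finite set of rationals with a fixed denominator that the map sends into itself. The orbits in question start in that set, so they are finite.

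First I would write the composition branchwise. On a branch where $T_{\beta_2}(x)=\beta_2x-k$ and then $T_{\beta_1}(y)=\beta_1y-l$ (with $k,l$ nonnegative integers), we get
\[
T_{\beta_1\circ\beta_2}(x)=\beta_1\beta_2x-(k\beta_1+l).
\]
The hypotheses $p_1=k_1q_2$ and $p_2=k_2q_1$ give
\[
\beta_1\beta_2=\frac{k_1q_2\,k_2q_1}{q_1q_2}=k_1k_2\in\mathbb{N},\qquad k\beta_1=\frac{kk_1q_2}{q_1}\in\tfrac{1}{q_1}\mathbb{Z}.
\]
So if $x=a/q_1$ with $a\in\mathbb{Z}$, then
\[
T_{\beta_1\circ\beta_2}(x)=\frac{k_1k_2a-kk_1q_2-lq_1}{q_1}\in\tfrac{1}{q_1}\mathbb{Z}.
\]
Since the image also lies in $[0,1)$, the finite set $G_{q_1}:=\{0,\tfrac{1}{q_1},\ldots,\tfrac{q_1-1}{q_1}\}$ is forward invariant under $T_{\beta_1\circ\beta_2}$.

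Next I would check that the starting points lie in this set, or land in it after one step. The point $\tfrac{j_1}{q_1}$ is in $G_{q_1}$ by definition. For the point $1$ we use the convention from the paper, $T(1)=\lim_{x\nearrow1}T(x)$. The left limit along the last branch is given by the same affine formula evaluated at $x=1=q_1/q_1$, so it lies in $\tfrac{1}{q_1}\mathbb{Z}\cap[0,1]$. This value is $T_{\beta_1}(1)=\tfrac{j_1}{q_1}$ (using $T_{\beta_2}(1)=1$ under the same limiting convention), which lies in $G_{q_1}$ because $j_1<q_1$. Hence both $O_{T_{\beta_1\circ\beta_2}}(1)$ and $O_{T_{\beta_1\circ\beta_2}}(\tfrac{j_1}{q_1})$ are contained in $G_{q_1}\cup\{1\}$, which is finite.

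The statements for $T_{\beta_2\circ\beta_1}$ follow by the symmetric argument with the roles of $(p_1,q_1)$ and $(p_2,q_2)$ exchanged. The relevant invariant set is then $G_{q_2}$, and we use $k\beta_2=kk_2q_1/q_2\in\tfrac{1}{q_2}\mathbb{Z}$.

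The only delicate point is the treatment of $1$ and of left endpoints. One has to make sure the left-limit convention never produces a value outside $\tfrac{1}{q_1}\mathbb{Z}\cap[0,1]$. That holds because every branch is affine with integer slope $k_1k_2$ and intercept in $\tfrac{1}{q_1}\mathbb{Z}$, so values at, and limits towards, points of $\tfrac{1}{q_1}\mathbb{Z}$ stay in $\tfrac{1}{q_1}\mathbb{Z}$. Beyond this, the argument is a direct denominator count, in contrast to the blow-up $s_n=m_nz^n$ in Lemma~\ref{lem:denomblowup}: under the present hypotheses the reduced ratio $\tfrac{p_1}{q_2}$ has $z=1$.
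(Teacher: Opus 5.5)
Your argument is correct and matches the paper's proof. Both observe that every branch $x\mapsto k_1k_2x-(k\beta_1+l)$ has integer slope and intercept in $\frac{1}{q_1}\mathbb{Z}$, so the finite set $\frac{1}{q_1}\mathbb{Z}\cap[0,1]$ is forward invariant under $T_{\beta_1\circ\beta_2}$, and the same holds with $q_2$ for $T_{\beta_2\circ\beta_1}$.

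One side remark in your proposal is wrong, though nothing depends on it. For non-integer $\beta_2$ the left-limit convention gives $T_{\beta_2}(1)=\frac{j_2}{q_2}$, not $1$, so $T_{\beta_1\circ\beta_2}(1)$ need not equal $\frac{j_1}{q_1}$. For example, $\beta_1=\frac{4}{3}$, $\beta_2=\frac{3}{2}$ gives $T_{\beta_1\circ\beta_2}(1)=\frac{2}{3}$, while $\frac{j_1}{q_1}=\frac{1}{3}$. Your preceding sentence already places this value in $\frac{1}{q_1}\mathbb{Z}\cap[0,1]$, which is all you need, so simply drop the identification.
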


\begin{proof}
Let us calculate  
\[
T_{\beta_1\circ \beta_2}(1)=\frac{p_1p_2-q_2(kp_1+lq_1)}{q_1q_2}=\frac{k_1k_2q_1q_2-q_2(kk_1q_2+lq_1)}{q_1q_2}=\frac{k_1k_2q_1-(kk_1q_2+lq_1)}{q_1}.
\]
Now, any rational that can be written as $\frac{p}{q_1}$ is mapped to a rational with the same denominator 
\[
T_{\beta_1\circ \beta_2}\left(\frac{p}{q_1}\right)=\frac{p_1p_2\frac{p}{q_1}-q_2(kp_1+lq_1)}{q_1q_2}=\frac{k_1k_2q_1q_2p-q_1q_2(kp_1+lq_1)}{q_1^2q_2}=\frac{k_1k_2p-(kp_1+lq_1)}{q_1}.
\]
From this we conclude that $O_{T_{\beta_1\circ \beta_2}}(1)$ and $O_{T_{\beta_1\circ \beta_2}}(\frac{j_1}{q_1})$ are finite sets. For $O_{T_{\beta_2\circ \beta_1}}(1)$ and $O_{T_{\beta_1\circ \beta_2}}(\frac{j_2}{q_2})$, the roles of $\beta_1$ and $\beta_2$ are reversed.
\end{proof}

The setting of the previous lemma seems promising, but as the following example shows, orbit sets of the discontinuities can be equal but the weights in the formula for the invariant density can differ.
\begin{example}\label{ex:nonequalmeasures}
    Let $\beta_1=\frac{3}{2}$ and $\beta_2=\frac{4}{3}$ and  $\beta_3=\frac{9}{2}$ and $\beta_4=\frac{4}{3}$. 
    Then the discontinuities of the invariant densities of $T_{\beta_1\circ \beta_2}$ and of $T_{\beta_3\circ \beta_4}$ are the same but  the heights are different. 
\end{example}
We will show this now (for the graphs of the maps, see Figure \ref{fig:examplenonequalmeasures}). We have $T_{\beta_1\circ\beta_2}(1)=T_{\beta_3\circ\beta_4}(1)=\frac{1}{2}$ and $T_{\beta_1\circ\beta_2}(\frac{1}{2})=T_{\beta_3\circ\beta_4}(\frac{1}{2})=0$. The other discontinuity is $\frac{1}{2}$ for both, for which we just  calculated is mapped to $0$. This gives us that both densities are of the form $C(1+A\cdot\mathds{1}_{[0,\frac{1}{2}]})$. One can check that for $T_{\beta_1\circ\beta_2}$ the invariant density is given by $\frac{2}{3}(1+\mathds{1}_{[0,\frac{1}{2}]})$. For $T_{\beta_3\circ\beta_4}$ the density is given by $\frac{6}{7}(1+\frac{1}{3}\mathds{1}_{[0,\frac{1}{2}]})$. This can either be found by constructing some equalities using the invariance of the measure or by using the explicit form of the measure from \cite{DK10} that is given in Equation (\ref{eq:invardensity}).

\begin{figure}[ht]
		\centering
		
		\subfigure{\begin{tikzpicture}[scale=5]
				\draw[white] (-0.25,0)--(1,0);
				\draw(0,0)node[below]{\small $0$}--(1,0)node[below]{\small $1$}--(1,1)--(0,1)node[left]{\small $1$}--(0,0);

				\draw[thick, blue, smooth, samples =20, domain=0:1/2] plot(\x,{2*\x)});
				\draw[thick,blue, smooth, samples =20, domain=1/2:3/4] plot(\x,{2*\x-1)});
                \draw[thick,blue, smooth, samples =20, domain=3/4:1] plot(\x,{2*\x-3/2)});

				\draw[dotted](1/2,0)node[below]{\small $\frac{1}{2}$}--(1/2,1);
                \draw[dotted](0,0.5)node[left]{\small $\frac{1}{2}$}--(1,0.5);
                \draw[dotted](0.75,0)node[below]{\small $\frac{3}{4}$}--(0.75,1);

		\end{tikzpicture}}
        \subfigure{\begin{tikzpicture}[scale=5]
				\draw[white] (-0.25,0)--(1,0);
				\draw(0,0)node[below]{\small $0$}--(1,0)node[below]{\small $1$}--(1,1)--(0,1)node[left]{\small $1$}--(0,0);

				\draw[thick, blue, smooth, samples =20, domain=0:1/6] plot(\x,{6*\x)});
				\draw[thick, blue, smooth, samples =20, domain=1/6:2/6] plot(\x,{6*\x-1)});
				\draw[thick, blue, smooth, samples =20, domain=2/6:3/6] plot(\x,{6*\x-2)});
                \draw[thick, blue, smooth, samples =20, domain=3/6:4/6] plot(\x,{6*\x-3)});
                \draw[thick, blue, smooth, samples =20, domain=4/6:3/4] plot(\x,{6*\x-4)});
                \draw[thick, blue, smooth, samples =20, domain=3/4:11/12] plot(\x,{6*\x-9/2)});
                \draw[thick, blue, smooth, samples =20, domain=11/12:1] plot(\x,{6*\x-11/2)});
				\draw[dotted](1/2,0)node[below]{\small $\frac{1}{2}$}--(1/2,1);

                \draw[dotted](0,0.5)node[left]{\small $\frac{1}{2}$}--(1,0.5);

                \draw[dotted](1/6,0)node[below]{\small $\frac{1}{6}$}--(1/6,1);
                 \draw[dotted](1/3,0)node[below]{\small $\frac{1}{3}$}--(1/3,1);
                 \draw[dotted](2/3,0)node[below]{\small $\frac{2}{3}$}--(2/3,1);
                 \draw[dotted](3/4,0)node[below]{\small $\frac{3}{4}$}--(3/4,1);
                  \draw[dotted](11/12,0)node[below]{\small $\frac{11}{12}$}--(11/12,1);

		\end{tikzpicture}}
		
		\caption{The map $T_{\beta_1}\circ T_{\beta_2}$  on the left and $T_{\beta_3}\circ T_{\beta_4}$ on the right for Example \ref{ex:nonequalmeasures}. }
		\label{fig:examplenonequalmeasures}
\end{figure}
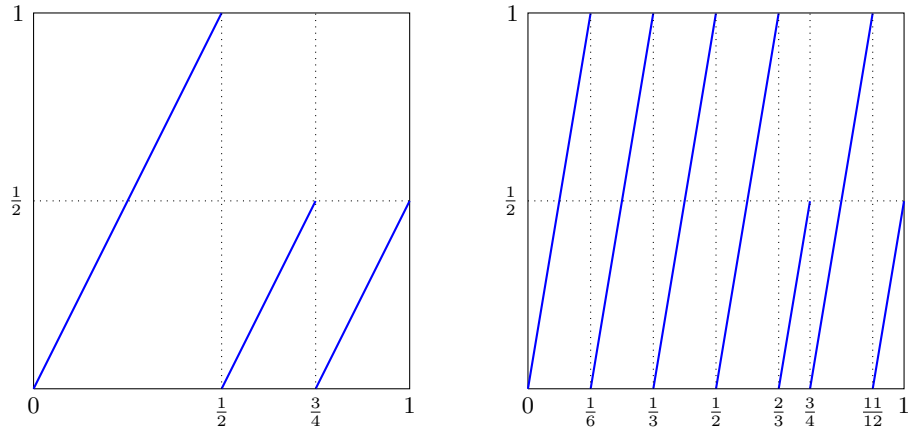

It seems that coincidence of invariant measure does not occur often. We believe that, in fact, we found all of them.

\begin{conjecture}
    All pairs of pairs $(\beta_1,\beta_2),(\beta_3,\beta_4)\in\mathbb{R}^2_{>1}$ for which we have $\mu_{(\beta_1,\beta_2)}=\mu_{(\beta_3,\beta_4)}$ are characterised by Theorem~\ref{th:maintheorem}.
\end{conjecture}

\bibliographystyle{alpha}
\bibliography{biblio}

\end{document}